\pgfplotsset{compat=newest}
\pgfplotsset{plot coordinates/math parser=false}
\newlength\figureheight
\newlength\figurewidth
\newcommandx{\DLtodo}[2][1=]{\todo[color=green!40,#1]{DL: #2}}
\newcommandx{\FStodo}[2][1=]{\todo[color=orange!40,#1]{FS: #2}}
\newcommandx{\LTtodo}[2][1=]{\todo[color=blue!40,#1]{LT: #2}}
\newcommandx{\MWtodo}[2][1=]{\todo[color=red!40,#1]{MW: #2}}
\newcolumntype{M}[1]{>{\centering\arraybackslash}m{#1}}
\newcommand \NN {\mathbb{N}}
\newcommand \RR {\mathbb{R}}
\newcommand \CC {\mathbb{C}}
\newcommand \EE {\mathbb{E}}
\newcommand \KK {\mathbb{K}}
\newcommand \Rcal {\mathcal{R}}
\newcommand \Ncal {\mathcal{N}}
\DeclareMathOperator*{\argmin}{argmin}
\DeclareMathOperator{\sign}{sign}
\DeclareMathOperator{\rint}{rint}
\DeclareMathOperator{\dist}{dist}
\newcommand{\scp}[2]{\langle #1 \,,\, #2\rangle}
\newcommand{\norm}[2][]{\|#2\|_{#1}}
\newcommand{\set}[2]{\{#1\, |\, #2\}}
\newcommand{%
     \scalebox{}{\input{}}  
}[2]{%
     \scalebox{#1}{\input{#2}}  
}
\date{}
\journalname{}
\begin{document}

\title{Extended Randomized Kaczmarz Method for Sparse Least Squares and Impulsive Noise Problems\thanks{The work of L.T. and D.L. has been supported by the ITN-ETN project TraDE-OPT funded by the European Union’s Horizon 2020 research and innovation programme under the Marie Skłodowska-Curie grant agreement No 861137. This work represents only the author’s view and the European Commission is not responsible for any use that may be made of the information it contains.}}
\titlerunning{Generalized extended Randomized Kaczmarz Method}

\author{Frank~Sch\"{o}pfer \and Dirk~A. Lorenz \and Lionel~Tondji \and Maximilian~Winkler}
\institute{Frank Sch\"{o}pfer
  \at Institut f\"ur Mathematik, Carl von Ossietzky Universit\"at Oldenburg, 26111 Oldenburg, Germany,\\
  \email{frank.schoepfer@uni-oldenburg.de}
  \and Dirk A. Lorenz
  \at Institute for Analysis and Algebra, TU Braunschweig, 38092 Braunschweig, Germany,\\
  \email{d.lorenz@tu-braunschweig.de}
  \and Lionel Tondji
  \at Institute for Analysis and Algebra, TU Braunschweig, 38092 Braunschweig, Germany,\\
  \email{l.ngoupeyou-tondji@tu-braunschweig.de}
  \and Maximilian Winkler
  \at Institute for Analysis and Algebra, TU Braunschweig, 38092 Braunschweig, Germany,\\
  \email{maximilian.winkler@tu-braunschweig.de}
  }

\maketitle

\begin{abstract}
The Extended Randomized Kaczmarz method is a well known iterative scheme which can find the Moore-Penrose inverse solution of a possibly inconsistent linear system and requires only one additional column of the system matrix in each iteration in comparison with the standard randomized Kaczmarz method. Also, the Sparse Randomized Kaczmarz method has been shown to converge linearly to a sparse solution of a consistent linear system. Here, we combine both ideas and propose an Extended Sparse Randomized Kaczmarz method. We show linear expected convergence to a sparse least squares solution in the sense that an extended variant of the regularized basis pursuit problem is solved. Moreover, we generalize the additional step in the method and prove convergence to a more abstract optimization problem. We demonstrate numerically that our method can find sparse least squares solutions of real and complex systems if the noise is concentrated in the complement of the range of the system matrix and that our generalization can handle impulsive noise.
\end{abstract}

\keywords{randomized Kaczmarz method, sparse solutions, least squares, impulsive noise}

\subclass{65F10, 68W20, 90C25}

\section{Introduction}

We consider the fundamental problem of approximating sparse solutions of large and possibly inconsistent linear systems
\[
Ax = b
\]
with matrix $A \in \KK^{m \times n}$ and right hand side $b \in \KK^{m}$, in the real case $\KK=\RR$ as well as in the complex case $\KK=\CC$.
In particular, we have in mind situations where $A=M \cdot D$ is the product of a tall matrix $M \in \KK^{m \times r}$ with $m > r$, and a matrix $D \in \KK^{r \times n}$ with $r \le n$, which acts as a basis or overcomplete dictionary that allows for a sparse representation of the solution, and where the given data $b$ may be corrupted by noise and need not be contained in the range $\Rcal(A)$ of $A$.
This setting is somewhat more general than the usual one in the field of compressed sensing~\cite{Don06_CS}, where mostly flat matrices $A$ with $m << n$ and full row rank are considered.
It arises e.g. in geophysical sparsity-promoting imaging problems~\cite{YWFH16}, where the system matrix is the product of a Curvelet transform matrix, which is suitable for a sparse representation of the solution, and a Jacobian, which corresponds to a linearized Born model, so that besides noisy measurement data there is also inconsistency due to a linearization error.

Here we set out to tackle such problems by solving combined optimization problems of the form
\begin{gather}
\min_{x \in \KK^n} f(x) \quad \mbox{s.t.}\quad Ax=\hat{y},\\ \nonumber 
\mbox{where}\quad \hat{y} = \argmin_{y \in \KK^m} g^*(b-y) \quad \mbox{s.t.}\quad y \in \Rcal(A)\label{eq:OPgeneral}
\end{gather}
with sparsity promoting functions $f$ and suitable data misfit functions $g^*$.
For instance, it is known that the choice $f(x)=\lambda \cdot \norm[1]{x} + \tfrac{1}{2} \cdot \norm[2]{x}^{2}$ favors sparse solutions for appropriate choices of $\lambda>0$, see~\cite{Don06,COS09,Sch12}, where $\norm[1]{x}$ and $\norm[2]{x}$ denote the $\ell_1$-norm and $\ell_2$-norm of $x$, respectively.
Similarly, by dividing the components of $x$ into $K$ groups $x=(x_1,\ldots,x_K)$ with $x_j \in \KK^{n_j}$, the function $f(x) = \lambda \cdot \sum_{j=1}^K \norm[2]{x_j} + \tfrac{1}{2} \cdot \norm[2]{x}^{2}$ favors group sparsity~\cite{SPH09}.
And in the related area of low rank matrix solutions~\cite{CCS08,RFP10} we may choose $f(X)=\lambda  \cdot \norm[*]{X} + \tfrac{1}{2} \cdot \norm[F]{X}^2$, where $\norm[*]{X}$ and $\norm[F]{X}$ denote the nuclear norm and Frobenius norm of a matrix $X$, respectively.
Suitable data misfit functions are $g^*(b-y)=\tfrac{1}{2} \cdot \norm[2]{b-y}^{2}$ for least squares solutions, and $\ell_1$-norm-like functions in situations where the data $b$ is corrupted by impulsive noise, i.e. the case where only some components of the data are faulty, but with possibly large errors, see~\cite{YZ11,SKPB11,WLLQY16}.

The linear system may be so large that full matrix operations are very expensive or even infeasible.
Then it appears desirable to use iterative algorithms with low computational cost and storage per iteration that produce good approximate solutions of~\eqref{eq:OPgeneral} after relatively few iterations.
A celebrated example for the computation of minimum $\ell_2$-norm solutions of consistent linear systems is the Kaczmarz method~\cite{Kac37}, also known as Algebraic Reconstruction Technique (ART), and its block and randomized variants~\cite{NT14} which started to get popular due to the seminal paper~\cite{SV09}.
In its most simple form for $\KK=\RR$, in each iteration a row vector $a_i^T$ of $A$ is chosen at random and the new iterate $x_{k+1}$ is then computed as the orthogonal projection of $x_k$ onto the solution hyperplane corresponding to the $i$-th equation $\scp{a_i}{x}=b_i$, i.e.\footnote{We use subscript indices for components of a vector, columns or rows of a matrix, and also as iteration indices. But the meaning should always be clear from the context.}
\[
x_{k+1} = x_k - \tfrac{\scp{a_i}{x_k}-b_i}{\norm[2]{a_i}^2} \cdot a_i \,,
\]
with initial value $x_0=0$.
The Randomized Sparse Kaczmarz method~\cite{SL19,LWSM14,P15} is a relatively new variant of the Kaczmarz method with almost the same low cost and storage requirements, and which has shown good performance in approximating sparse solutions of large consistent linear systems.
It uses two variables $x_k^*$ and $x_k$ and reads as
\begin{align*}
x_{k+1}^* &= x_k^* - \tfrac{\scp{a_i}{x_k}-b_i}{\norm[2]{a_i}^2} \cdot a_i \,,\\
x_{k+1} &= S_{\lambda}(x_{k+1}^*)
\end{align*}
with initial values $x_0=x_0^*=0$, and the soft shrinkage operator, which acts componentwise on a vector $x$ as
\begin{equation} \label{eq:S}
\big(S_{\lambda}(x))_j = \max\{|x_j|-\lambda,0\} \cdot \sign(x_j)\,.
\end{equation}
We refer the interested reader to~\cite{CQ21} for an extension of this algorithm to sparse tensor recovery problems.

For consistent systems the iterates of the Randomized Sparse Kaczmarz method converge in expectation to the solution of the regularized \emph{Basis Pursuit Problem}
\[
\min_{x \in \RR^n} \lambda \cdot \norm[1]{x} + \tfrac{1}{2} \cdot \norm[2]{x}^{2} \quad \mbox{s.t.}\quad Ax=b \,.
\]
However, for inconsistent systems the iterates do not converge, see~\cite{DHK20,SL19} for a detailed study of this phenomenon.
This behaviour is also well-known for the vanilla Kaczmarz method.
As a remedy, in~\cite{ZF13,Du19} an Extended Randomized Kaczmarz method was proposed, which additionally uses one column $\tilde{a}_j$ of $A$ in each step and finds the Moore-Penrose inverse solution, i.e. the least squares solution with minimum $\ell^2$-norm.
Using an additional variable $z_k$ with initial value $z_0=b$, the iterates are computed as
\begin{align*}
z_{k+1} &= z_k - \tfrac{\scp{\tilde{a}_j}{z_k}}{\norm[2]{\tilde{a}_j}^2} \cdot \tilde{a}_j \,,\\
x_{k+1} &= x_k - \tfrac{\scp{a_i}{x_k}-b_i+z_{k+1,i}}{\norm[2]{a_i}^2} \cdot a_i \,.
\end{align*}
Advantages of using block variants have recently been discussed in~\cite{DSS20,Wu22}, and generalizations to tensor recovery problems are currently in preparation~\cite{DS21}.

In this paper, we adopt these ideas and propose the Generalized Extended Randomized Kaczmarz method to solve~\eqref{eq:OPgeneral}, see Algorithm~\ref{alg:GERK}.
For example, to obtain sparse least squares solutions via
\begin{gather*}
\min_{x \in \RR^n} \lambda \cdot \norm[1]{x} + \tfrac{1}{2} \cdot \norm[2]{x}^{2} \quad \mbox{s.t.}\quad Ax=\hat{y},\\ \nonumber 
\mbox{where}\quad \hat{y} = \argmin_{y \in \RR^m} \tfrac{1}{2} \cdot \norm[2]{b-y}^{2} \quad \mbox{s.t.}\quad y \in \Rcal(A)
\end{gather*}
the iteration reads as
\begin{align*}
z_{k+1} &= z_k - \tfrac{\scp{\tilde{a}_j}{z_k}}{\norm[2]{\tilde{a}_j}^2} \cdot \tilde{a}_j \,,\\
x_{k+1}^* &= x_k^* - \tfrac{\scp{a_i}{x_k}-b_i+z_{k+1,i}}{\norm[2]{a_i}^2} \cdot a_i \,,\\
x_{k+1} &= S_{\lambda}(x_{k+1}^*) \,,
\end{align*}
where $\tilde{a}_j$ is the $j$-th column of $A$.
We prove expected convergence with rates under appropriate assumptions for general functions $f$ and $g^*$ with the help of global error bounds.
We also consider block versions, and 
in particular, convergence in the complex case $\KK=\CC$ is shown by considering the iteration as a suitable block method in real variables.

In the next section we recall some basic notions and properties of convex functions and Bregman distances, which will be used to analyze the iteration methods in Section~\ref{sec:convergence}.
Convergence rates will be derived with the help of global error bounds from Section~\ref{sec:errorbounds}.
The theoretical results are supported by numerical examples for sparse solutions of real and complex inconsistent systems under different noise models in Section~\ref{sec:NumericalExamples}.

\section{Preliminaries}

For $x,y\in\RR^n$, we denote the standard inner product by $\scp{x}{y}$ and for $p\in [1,+\infty[$ the $\ell_p$ norm by
$$\|x\|_p := \big(\sum_{i=1}^n |x_i|^p\big)^{\tfrac1p}.$$ For a nonempty closed convex set $C \subset \RR^n$, we write its Euclidean projector as $P_C$ and its distance function by 
$$ \mathrm{dist}(x,C) := \inf_{z\in C} \|x-z\|_2. $$ 

As in~\cite{SL19} we will analyze the convergence of the algorithms with the help of the Bregman distance~\cite{Bre67} with respect to the objective function $f$.
To this end we recall some well known concepts and properties of convex functions~\cite{RW09}.
Let $f:\RR^n \to \RR$ be convex and finite everywhere.
Then $f$ is continuous and its \emph{subdifferential}
\[
\partial f(x) := \set{x^* \in \RR^n}{ f(y) \ge f(x) + \scp{x^*}{y-x}\: \mbox{for all $y \in \RR^n$}}
\]
at any $x \in \RR^n$ is nonempty, compact and convex.
Throughout the paper we assume that $f$ is even \emph{strongly convex}, i.e. there is some $\alpha>0$ such that for all $x,y \in \RR^n$ and \emph{subgradients} $x^* \in \partial f(x)$ we have
\[
f(y) \ge f(x) + \scp{x^*}{y-x} + \tfrac{\alpha}{2} \cdot \norm[2]{y-x}^2 \,.
\]
Then $f$ is \emph{coercive}, i.e.
\[
\lim_{\norm[2]{x} \to \infty} f(x)=\infty \,,
\]
and its \emph{conjugate function} $f^*:\RR^n \to \RR$ with
\[
f^*(x^*):= \sup_{y \in \RR^n} \scp{x^*}{y} - f(y)
\]
is also convex, finite everywhere and coercive.
Additionally, $f^*$ is differentiable with a \emph{Lipschitz-continuous gradient} with constant $L_{f^*}=\frac{1}{\alpha}$, i.e. for all $x^*,y^* \in \RR^n$ we have
\[
\norm[2]{\nabla f^*(x^*)-\nabla f^*(y^*)} \le L_{f^*} \cdot \norm[2]{x^*-y^*} \,,
\]
which implies the estimate
\begin{equation} \label{eq:Lip}
f^*(y^*) \le f^*(x^*) -\scp{\nabla f^*(x^*)}{y^*-x^*} + \tfrac{L_{f^*}}{2} \cdot \norm[2]{x^*-y^*}^2 \,.
\end{equation}

\begin{example}[cf.~\cite{LSW14,Yin10}] \label{exmp:f}
The sparsity promoting objective function
\begin{equation} \label{eq:spf}
f(x) := \lambda \cdot \norm[1]{x} + \tfrac{1}{2} \cdot \norm[2]{x}^{2}
\end{equation}
is strongly convex with constant $\alpha=1$ for any $\lambda\geq 0$, its subdifferential is
\[
\partial f(x)=\set{x+\lambda \cdot s}{\mbox{$s_j=\sign(x_j)$ if $x_j\not=0$, and $s_j\in [-1,1]$ if $x_j=0$}} \,,
\]
and its conjugate function can be computed with the soft shrinkage operator~\eqref{eq:S} as
\[
f^{*}(x^{*}) = \tfrac{1}{2} \cdot \norm[2]{S_{\lambda}(x^{*})}^{2} \quad \mbox{with} \quad \nabla f^{*}(x^{*}) = S_{\lambda}(x^{*}) \,.
\]
\end{example}

\begin{definition} \label{def:D}
The \emph{Bregman distance} $D_f^{x^*}(x,y)$ between $x,y \in \RR^n$ with respect to $f$ and a subgradient $x^* \in \partial f(x)$ is defined as
\[
D_f^{x^*}(x,y):=f(y)-f(x) -\scp{x^*}{y - x}\,.
\]
\end{definition}
Fenchel's equality states that $f(x) + f^*(x^*) = \scp{x}{x^*}$ if $x^*\in\partial f(x)$ and implies that the Bregman distance can be written as
\[
D_f^{x^*}(x,y) = f^*(x^*)-\scp{x^*}{y} + f(y)\,.
\]

\begin{example}[cf.~\cite{SL19}]  \label{exmp:D}
For $f(x)=\frac{1}{2} \cdot \norm[2]{x}^2$ we just have $D_f^{x^*}(x,y)=\frac{1}{2}\norm[2]{x-y}^2$.
For $f(x) = \lambda \cdot \norm[1]{x} + \tfrac{1}{2} \cdot \norm[2]{x}^{2}$ and any $x^*=x+\lambda \cdot s \in \partial f(x)$ we have
\[
D_f^{x^*}(x,y)=\frac{1}{2} \cdot \norm[2]{x-y}^2 + \lambda \cdot(\norm[1]{y}-\scp{s}{y}) \,.
\]
\end{example}

The following inequalities are crucial for the convergence analysis of the randomized algorithms.
They immediately follow from the definition of the Bregman distance and the assumption of strong convexity of $f$, cf.~\cite{LSW14}.
For all $x,y \in \RR^n$ and $x^* \in \partial f(x)$, $y^* \in \partial f(y)$ we have
\begin{equation} \label{eq:D}
\frac{\alpha}{2} \norm[2]{x-y}^2 \le  D_f^{x^*}(x,y) \le \scp{x^*-y^*}{x-y} \le \norm[2]{x^*-y^*} \cdot \norm[2]{x-y}
\end{equation}

Note that if $f$ is differentiable with a Lipschitz-continuous gradient, then we also have the (better) upper estimate $D_f^{x^*}(x,y) \le L_f \cdot \norm[2]{x-y}^2$, but in general this need not be the case.
The following example was also used in~\cite{P15} as a smoothed version of~\eqref{eq:spf}.

\begin{example}  \label{exmp:Huber}
For $\varepsilon>0$ the \emph{Huber function}~\cite{Hub73} is defined by
\[
r_{\varepsilon}(x) := \sum_{j=1}^n \begin{cases}
    |x_j|-\tfrac{\varepsilon}{2} &, |x_j|>\varepsilon \\
    \tfrac{1}{2 \cdot \varepsilon} \cdot x_j^2 &, |x_j|\le \varepsilon.
\end{cases} \,.
\]
Then for $\tau>0$ the function
\[
f(x) := r_{\varepsilon}(x) + \tfrac{\tau}{2} \cdot \norm[2]{x}^2, \quad 
\]
is $\tau$-strongly convex and has a $\big(\tfrac{1}{\varepsilon}+\tau)$-Lipschitz-continuous gradient with
\[
\big(\nabla f(x)\big)_j = \big(\tfrac{1}{\max(\varepsilon,|x_j|)}+\tau\big) \cdot x_j \,.
\]
\end{example}

\section{Error bounds for linearly constrained optimization problems}
\label{sec:errorbounds}

Consider the feasible, convex and linearly constrained optimization problem
\begin{equation}
\min_{x \in \RR^n} f(x) \quad \mbox{s.t.} \quad Ax=b \label{eq:OPfeas}
\end{equation}
with a nonzero matrix $A \in \RR^{m \times n}$, right hand side $b \in \Rcal(A)$, and strongly convex objective function $f:\RR^n \to \RR$.
This problem has a unique solution $\hat{x}$ which fulfills $\partial f(\hat{x}) \cap \Rcal(A^T) \not=\emptyset$.
To obtain convergence rates for the solution algorithms, we will estimate the Bregman distance of the iterates to the solution $\hat{x}$ by \emph{error bounds} of the form $D_f^{x^*}(x,\hat{x}) \le \gamma \cdot \norm[2]{Ax-b}$ or $D_f^{x^*}(x,\hat{x}) \le \gamma \cdot \norm[2]{Ax-b}^2$.
We will see that such error bounds always hold if $f$ has a Lipschitz-continuous gradient.
But they also hold under weaker conditions.
The following example was already proved in~\cite{SL19} (and here it also follows from Theorem~\ref{thm:EB} below).

\begin{example} \label{exmp:EBsparse}
Let $\hat{x}$ be the unique solution of~\eqref{eq:OPfeas} with objective function $f(x) = \lambda \cdot \norm[1]{x} + \tfrac{1}{2} \cdot \norm[2]{x}^{2}$.
Then there exists $\gamma(\hat{x}) >0$ such that for all $x \in \RR^n$ and $x^* \in \partial f(x) \cap \Rcal(A^T)$ we have
\[
D_f^{x^*}(x,\hat{x})\le \gamma(\hat{x}) \cdot \norm[2]{Ax-b}^2\,.
\]
Based on the results of~\cite{LY13}, an explicit expression of $\gamma(\hat{x})$ for $\hat{x} \not= 0$ was given in~\cite{SL19} as follows:
Let $A_J\not=0$ denote a submatrix that is formed by the columns of $A$ indexed by $J \subset \{1,\ldots,n\}$, and let $\sigma_{\min}^{+}(A_J)$ denote its smallest positive singular value.
We set
\[
\tilde{\sigma}_{\min}(A):=\min\set{\sigma_{\min}^{+}(A_J)}{J \subset \{1,\ldots,n\}, A_J \not=0} \,,
\]
and for $\hat{x} \not= 0$ we define $|\hat{x}|_{\min}=\min\set{|\hat{x}_j|}{\hat{x}_j \not=0}$.
Then we have
\[
\gamma(\hat{x})=\frac{1}{\tilde{\sigma}_{\min}^2(A)} \cdot \frac{|\hat{x}|_{\min} + 2 \lambda}{|\hat{x}|_{\min}} \,.
\]
Moreover, for $\hat{x} = 0$ we may use $\gamma(0)=\frac{2n}{\big(\sigma_{\min}^{+}(A)\big)^2}$ (this can be shown with inequality~\eqref{eq:D_estimate} in the beginning of the proof of Lemma~\ref{lem:lineargrowth} below, but since this explicit expression is not so important here, we omit the details).
Note that $\gamma(\hat{x})$ is quite discontinuous with respect to $\hat{x}$ and may become arbitrarily large, since $\displaystyle \lim_{\hat{x}\not=0, |\hat{x}|_{\min} \to 0} \gamma(\hat{x})=\infty$.
We do not know whether these expressions for $\gamma(\hat{x})$ are the best possible.
\end{example}

To clarify the assumptions under which such error bounds hold for more general objective functions, we introduce the concepts of calmness~\cite{RW09} and linear regularity~\cite{BBL99}.
Let $B_2$ denote the closed unit ball of the $\ell_2$-norm.

\begin{definition}
The (set-valued) subdifferential mapping $\partial f:\RR^n \rightrightarrows \RR^n$ is \emph{calm} at $\hat{x}$ if there are constants $\varepsilon,L>0$ such that
\begin{equation}
\partial f(x) \subset \partial f(\hat{x}) + L \cdot \norm[2]{x-\hat{x}} \cdot B_2 \quad \mbox{for any $x$ with} \quad \norm[2]{x-\hat{x}}\le \varepsilon\,. \label{eq:calm}
\end{equation} 
\end{definition}

Note that calmness is a local growth condition similar to Lipschitz-continuity of a gradient mapping, but for fixed $\hat{x}$.
Furthermore, this does not imply that for all $x^* \in \partial f(x)$ and all $\hat{x}^* \in \partial f(\hat{x})$ we have $\norm[2]{x^*-\hat{x}^*} \le L \cdot \norm[2]{x-\hat{x}}$, but only for some $\hat{x}^*$ which may depend on $x^*$.
Of course, any Lipschitz-continuous gradient mapping is calm everywhere.

\begin{example} \label{exmp:calm}
\begin{enumerate} 
\item The subdifferential mapping of any convex piecewise linear-quadratic function $f:\RR^n \to \RR$ is calm everywhere. In particular, this holds for $f(x) = \lambda \cdot \norm[1]{x} + \tfrac{1}{2} \cdot \norm[2]{x}^{2}$.
\item For matrices $X \in \RR^{n_1 \times n_2}$ the subdifferential mapping of $f(X)=\lambda  \cdot \norm[*]{X} + \tfrac{1}{2} \cdot \norm[F]{X}^2$ is calm everywhere.
\item The subdifferential mapping of
\[
f(x)=\lambda \cdot \norm[2]{x} + \tfrac{1}{2} \cdot \norm[2]{x}^2
\]
is calm everywhere with
\[
\partial f(x)=
\begin{cases}
\lambda \cdot \tfrac{x}{\norm[2]{x}} + x, & x \not= 0 \\
\lambda \cdot B_2, & x=0
\end{cases}
\]
and it holds that 
\[
  \begin{split}
    f^*(x^*)&=\tfrac{1}{2} \cdot \norm[2]{x^*-P_{\lambda \cdot  B_2}(x^*)}^2,\\
    \nabla f^*(x^*)&=x^*-P_{\lambda \cdot B_2}(x^*) = \max\left\{0,1-\tfrac{\lambda}{\norm[2]{x^*}}\right\} \cdot x^*.
  \end{split}
\]
\item Divide the components of $x$ into $K$ groups $x=(x_1,\ldots,x_K)$ with $x_j \in \RR^{n_j}$.
Then the subdifferential mapping of $f(x) = \lambda \cdot \sum_{j=1}^K \norm[2]{x_j} + \tfrac{1}{2} \cdot \norm[2]{x}^{2}$ is calm everywhere.
\end{enumerate}

The cases (a) and (b) were already proven in~\cite{Sch16}, and (d) is the group-version of (c),
hence, we show (c). For $\hat{x}\not=0$ and $x \not=0$ the function $f$ is indeed differentiable with
\[
\norm[2]{\nabla f(x) - \nabla f(\hat{x})} \le \left(1+\tfrac{2 \lambda}{\norm[2]{\hat{x}}} \right) \cdot \norm[2]{x-\hat{x}} \,,
\]
i.e.~\eqref{eq:calm} holds with $L=1+\tfrac{2\lambda}{\norm[2]{\hat{x}}}$ for all $x$ with $\norm[2]{x-\hat{x}} < \frac{\norm[2]{\hat{x}}}{2}$.
For $\hat{x}=0$ and $x \not=0$ we have $\nabla f(x)=\lambda \cdot \tfrac{x}{\norm[2]{x}} + x \in \partial f(\hat{x}) + \norm[2]{x-\hat{x}} \cdot \tfrac{x-\hat{x}}{\norm[2]{x-\hat{x}}}$, i.e.~\eqref{eq:calm} holds with $L=1$ for all $x$ since $\tfrac{x-\hat{x}}{\norm[2]{x-\hat{x}}} \in B_2$.
\qed
\end{example}

\begin{definition}
Let $\partial f(x) \cap \Rcal(A^T)\not=\emptyset$.
Then the collection $\{\partial f(\hat{x}), \Rcal(A^T)\}$ is \emph{linearly regular}, if there is a constant $\gamma>0$ such that for all $x^* \in \RR^n$ we have 
\begin{equation}
\dist\big(x^*,\partial f(\hat{x}) \cap \Rcal(A^T)\big) \le \gamma \cdot \Big( \dist\big(x^*,\partial f(\hat{x}) \big) +  \dist\big(x^*,\Rcal(A^T)\big) \Big)\,. \label{eq:linreg}
\end{equation}
\end{definition}

Obviously, if $f$ is differentiable at $\hat{x}$, i.e. if $\partial f(\hat{x})=\{\nabla f(\hat{x})\}$ is a singleton, then we have linear regularity.

\begin{example}[cf.~\cite{BBL99,SL19}] \label{exmp:linreg}
The collection $\{\partial f(\hat{x}), \Rcal(A^T)\}$ is linearly regular, if
\begin{enumerate}
\item $\partial f(\hat{x})$ is polyhedral (which holds for piecewise linear-quadratic $f$ in particular), or if
\item $\rint\big(\partial f(\hat{x})\big) \cap \Rcal(A^T)\not=\emptyset$, where $\rint\big(\partial f(\hat{x})\big)$ denotes the relative interior of $\partial f(\hat{x})$.
\end{enumerate}
\end{example}

The condition in Example~\ref{exmp:linreg}~(b) is a standard regularity assumption, similar to the Slater condition. 
In~\cite{SL19} local error bounds were sufficient to prove convergence, because all iterates were guaranteed to be bounded.
In the present paper this need not be the case (we will in general only show boundedness in expectation).
But here we will derive global error bounds under a global growth condition on the subdifferential mapping of $f$.

\begin{definition}
We say the subdifferential mapping of $f$ \emph{grows at most linearly}, if there exist $\eta,\rho \ge 0$ such that for all $x \in \RR^n$ and $x^* \in \partial f(x)$ we have
\begin{equation} \label{eq:lineargrowth}
\norm[2]{x^*} \le \eta \cdot \norm[2]{x} + \rho\,.
\end{equation}
\end{definition}

\begin{example} \label{exmp:LinearGrowth}
Any Lipschitz-continuous gradient mapping grows at most linearly.
Furthermore, the subdifferential mappings of all functions in Example~\ref{exmp:calm} grow at most linearly.
\end{example}

\begin{lemma} \label{lem:lineargrowth}
Let $\hat{x}$ be the unique solution of~\eqref{eq:OPfeas}.
If the subdifferential mapping of $f$ grows at most linearly, then there exists some constant $c>0$ such that for all $x \in \RR^n$ and $x^* \in \partial f(x) \cap \Rcal(A^T)$ we have
\[
D_f^{x^*}(x,\hat{x}) \le c \cdot \left( \sqrt{D_f^{x^*}(x,\hat{x})} + \norm[2]{\hat{x}} +1 \right) \cdot \norm[2]{Ax-b} \,.
\]
\end{lemma}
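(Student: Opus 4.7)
The plan is to bound the Bregman distance using the three ingredients that the hypotheses provide: the symmetric form of the Bregman distance, the representation of the relevant subgradients as $A^T w$, and the linear growth of $\partial f$ together with the strong convexity of $f$.

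First I would use the right-hand inequality in~\eqref{eq:D}: picking any $\hat{x}^* \in \partial f(\hat{x}) \cap \Rcal(A^T)$ (which exists because $\hat{x}$ solves~\eqref{eq:OPfeas}), we immediately get $D_f^{x^*}(x,\hat{x}) \le \scp{x^* - \hat{x}^*}{x - \hat{x}}$. Since both $x^*$ and $\hat{x}^*$ lie in $\Rcal(A^T)$, write $x^* = A^T w$ and $\hat{x}^* = A^T \hat{w}$ choosing the minimum-norm representatives, and use $A\hat{x} = b$ to turn this into
\[
D_f^{x^*}(x,\hat{x}) \le \scp{w - \hat{w}}{Ax - b} \le \norm[2]{w - \hat{w}} \cdot \norm[2]{Ax - b}.
\]
Since the minimum-norm preimages satisfy $\norm[2]{w} \le \norm[2]{x^*}/\sigma_{\min}^+(A)$ and similarly for $\hat{w}$, this reduces the problem to bounding $\norm[2]{x^*} + \norm[2]{\hat{x}^*}$.

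Next I would invoke the linear growth hypothesis~\eqref{eq:lineargrowth} to get $\norm[2]{x^*} \le \eta\norm[2]{x} + \rho$ and $\norm[2]{\hat{x}^*} \le \eta\norm[2]{\hat{x}} + \rho$. The triangle inequality gives $\norm[2]{x} \le \norm[2]{x - \hat{x}} + \norm[2]{\hat{x}}$, and the left-hand inequality in~\eqref{eq:D} translates the norm gap into $\norm[2]{x - \hat{x}} \le \sqrt{2 D_f^{x^*}(x,\hat{x})/\alpha}$. Substituting everything back yields
\[
\norm[2]{w - \hat{w}} \le c_1 \bigl( \sqrt{D_f^{x^*}(x,\hat{x})} + \norm[2]{\hat{x}} + 1 \bigr)
\]
for a constant $c_1$ depending only on $\eta$, $\rho$, $\alpha$, and $\sigma_{\min}^+(A)$, and multiplying by $\norm[2]{Ax - b}$ completes the estimate with the required form.

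I don't expect any genuine obstacle in this proof; the only mildly delicate point is to remember that the minimum-norm preimage of $x^* = A^T w$ lies in $\Rcal(A)$, so that $\sigma_{\min}^+(A)$ (rather than the ill-defined smallest singular value over all of $\RR^m$) is what controls $\norm[2]{w}$. The appearance of the square root on the right-hand side is not an accident: it is the price paid for using the left inequality in~\eqref{eq:D}, which is only quadratic in $\norm[2]{x - \hat{x}}$, and it is precisely what later allows this bound to be combined with itself to extract a usable linear error bound in the subsequent convergence analysis.
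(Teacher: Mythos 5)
Your proposal is correct and follows essentially the same route as the paper's proof: bound $D_f^{x^*}(x,\hat{x})$ by $\scp{u-\hat{u}}{Ax-b}$ via the symmetric Bregman inequality and the representations $x^*=A^Tu$, $\hat{x}^*=A^T\hat{u}$ with $u,\hat{u}\in\Ncal(A^T)^\bot$, control the preimages through $\sigma_{\min}^+(A)$, and then use linear growth plus strong convexity to bound $\norm[2]{x^*-\hat{x}^*}$ by $\eta\sqrt{\tfrac{2}{\alpha}D_f^{x^*}(x,\hat{x})}+2(\eta\norm[2]{\hat{x}}+\rho)$. The only cosmetic difference is that you estimate $\norm[2]{w}$ and $\norm[2]{\hat{w}}$ separately before applying the triangle inequality, whereas the paper bounds $\norm[2]{u-\hat{u}}$ directly by $\norm[2]{x^*-\hat{x}^*}/\sigma_{\min}^+(A)$; this changes nothing of substance.
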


\begin{proof}
To $\hat{x}$ there is some $\hat{x}^* \in \partial f(\hat{x}) \cap \Rcal(A^T)$.
We choose $u,\hat{u} \in \Ncal(A^T)^\bot$ with $x^*=A^T u$ and $\hat{x}^*=A^T \hat{u}$, and by~\eqref{eq:D} we estimate
\begin{align} \label{eq:D_estimate}
D_f^{x^*}(x,\hat{x}) & \le \scp{x^*-\hat{x}^*}{x-\hat{x}} = \scp{A^Tu-A^T\hat{u}}{x-\hat{x}} = \scp{u-\hat{u}}{Ax-b} \nonumber \\
& \le \norm[2]{u-\hat{u}}\cdot \norm[2]{Ax-b} \le \tfrac{1}{\sigma_{\min}^+(A)} \cdot \norm[2]{A^Tu-A^T\hat{u}} \cdot \norm[2]{Ax-b} \nonumber \\
& = \tfrac{1}{\sigma_{\min}^+(A)} \cdot \norm[2]{x^*-\hat{x}^*} \cdot \norm[2]{Ax-b} \,.
\end{align}
It remains to estimate $\norm[2]{x^*-\hat{x}^*}$.
The assumption of at most linear growth~\eqref{eq:lineargrowth} together with~\eqref{eq:D} implies
\begin{align*}
\norm[2]{x^*-\hat{x}^*} &\le \eta \cdot (\norm[2]{x} + \norm[2]{\hat{x}}) + 2 \rho \\
&\le \eta \cdot \norm[2]{x-\hat{x}} + 2 \cdot (\eta \cdot \norm[2]{\hat{x}} +\rho) \\
& \le \eta \cdot \sqrt{\tfrac{2}{\alpha} \cdot D_f^{x^*}(x,\hat{x})}   + 2 \cdot (\eta \cdot \norm[2]{\hat{x}} +\rho) \,,
\end{align*}
from which the assertion follows.
\qed
\end{proof}

Now we can derive the global error bound.

\begin{theorem} \label{thm:EB}
Let $f:\RR^n\to \RR$ be strongly convex.
If its subdifferential mapping grows at most linearly, is calm at the unique solution $\hat{x}$ of~\eqref{eq:OPfeas}, and if the collection $\{\partial f(\hat{x}), \Rcal(A^T)\}$ is linearly regular, then there exists $\gamma(\hat{x})>0$ such that for all $x \in \RR^n$ and $x^* \in \partial f(x) \cap \Rcal(A^T)$ we have the global error bound
\begin{equation}
D_f^{x^*}(x,\hat{x})\le \gamma(\hat{x}) \cdot \norm[2]{Ax-b}^2\,. \label{eq:EB}
\end{equation}
In particular, this holds if $f$ has a Lipschitz-continuous gradient.
\end{theorem}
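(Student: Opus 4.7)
The plan is to split the analysis at the calmness radius $\varepsilon$ around $\hat{x}$, establish a quadratic bound separately on $\{\norm[2]{x-\hat{x}}\le \varepsilon\}$ and on $\{\norm[2]{x-\hat{x}}>\varepsilon\}$, and take the larger of the two constants as $\gamma(\hat{x})$. The common starting point will be the chain~\eqref{eq:D_estimate} from the proof of Lemma~\ref{lem:lineargrowth}, which for any $\hat{x}^* \in \partial f(\hat{x}) \cap \Rcal(A^T)$ already yields
\[
D_f^{x^*}(x,\hat{x}) \le \tfrac{1}{\sigma_{\min}^+(A)} \cdot \norm[2]{x^*-\hat{x}^*} \cdot \norm[2]{Ax-b}\,.
\]
So in each regime it suffices to produce some $\hat{x}^*$ with $\norm[2]{x^*-\hat{x}^*}$ controlled by a multiple of $\sqrt{D_f^{x^*}(x,\hat{x})}$.

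For $\norm[2]{x-\hat{x}}\le \varepsilon$, I will invoke calmness~\eqref{eq:calm} to obtain $\dist(x^*,\partial f(\hat{x}))\le L \cdot \norm[2]{x-\hat{x}}$. Since $x^* \in \Rcal(A^T)$ by hypothesis, the second distance in~\eqref{eq:linreg} vanishes, so linear regularity furnishes an $\hat{x}^* \in \partial f(\hat{x}) \cap \Rcal(A^T)$ with $\norm[2]{x^*-\hat{x}^*} \le \gamma L \cdot \norm[2]{x-\hat{x}}$. Plugging this into the display above and applying strong convexity in the form $\norm[2]{x-\hat{x}}\le\sqrt{2D_f^{x^*}(x,\hat{x})/\alpha}$ gives an estimate of the form $\sqrt{D_f^{x^*}(x,\hat{x})}\le c_1 \cdot \norm[2]{Ax-b}$, and squaring yields the quadratic bound on the near region.

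For $\norm[2]{x-\hat{x}}>\varepsilon$, strong convexity conversely forces $D_f^{x^*}(x,\hat{x})\ge \alpha\varepsilon^2/2$, so the constant term $\norm[2]{\hat{x}}+1$ in the right-hand side of Lemma~\ref{lem:lineargrowth} can be absorbed into a scalar multiple of $\sqrt{D_f^{x^*}(x,\hat{x})}$, collapsing the lemma to $\sqrt{D_f^{x^*}(x,\hat{x})}\le c_2 \cdot \norm[2]{Ax-b}$. Setting $\gamma(\hat{x}) := \max\{c_1^2, c_2^2\}$ closes the global bound. The main hurdle will be the near regime, specifically the coupling of calmness with linear regularity: the $\hat{x}^*$ must be the element of $\partial f(\hat{x})\cap\Rcal(A^T)$ produced by~\eqref{eq:linreg} applied to the current $x^*$, not an arbitrary fixed dual certificate, or else the bound on $\norm[2]{x^*-\hat{x}^*}$ would fail to shrink with $\norm[2]{x-\hat{x}}$. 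The final claim for a globally Lipschitz $\nabla f$ follows because then $\partial f(\hat{x})=\{\nabla f(\hat{x})\}$ is a singleton, hence polyhedral and linearly regular with $\Rcal(A^T)$ by Example~\ref{exmp:linreg}(a), while Lipschitz continuity itself implies calmness and at most linear growth (Example~\ref{exmp:LinearGrowth}).
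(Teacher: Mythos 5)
Your proposal is correct and follows essentially the same route as the paper's proof: both combine the estimate~\eqref{eq:D_estimate} with $\hat{x}^*=P_{\partial f(\hat{x})\cap\Rcal(A^T)}(x^*)$, calmness plus linear regularity in the near regime, and Lemma~\ref{lem:lineargrowth} with the additive constant absorbed via the lower bound $D_f^{x^*}(x,\hat{x})>\tfrac{\alpha}{2}\varepsilon^2$ in the far regime. The only cosmetic difference is that you split cases on $\norm[2]{x-\hat{x}}\lessgtr\varepsilon$ while the paper splits on $D_f^{x^*}(x,\hat{x})\lessgtr\tfrac{\alpha}{2}\varepsilon^2$; by~\eqref{eq:D} these lead to the same two arguments.
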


\begin{proof}
Let $\alpha>0$ be the strong convexity constant, and let $\varepsilon, L>0$ be as in~\eqref{eq:calm} in the definition of calmness.
At first we consider the case $D_f^{x^*}(x,\hat{x}) \le \tfrac{\alpha}{2} \cdot \varepsilon^2$.
Then by~\eqref{eq:D} we have $\norm[2]{x-\hat{x}} \le \varepsilon$, so that by~\eqref{eq:calm} and~\eqref{eq:D_estimate} we get
\begin{equation} \label{eq:DL}
    \dist\big(x^*,\partial f(\hat{x})\big) \le L \cdot \norm[2]{x-\hat{x}} \le L \cdot \sqrt{\tfrac{2}{\alpha} \cdot D_f^{x^*}(x,\hat{x})} \,.
\end{equation}
Let $C:=\partial f(\hat{x}) \cap \Rcal(A^T)$.
By choosing $\hat{x}^*:=P_C(x^*)$ in~\eqref{eq:D_estimate} in the beginning of the proof of Lemma~\ref{lem:lineargrowth}, we conclude that
\[
D_f^{x^*}(x,\hat{x}) \le \tfrac{1}{\sigma_{\min}^+(A)} \cdot \dist\big(x^*,C) \cdot \norm[2]{Ax-b} \,.
\]
Since $x^* \in \Rcal(A^T)$, linear regularity~\eqref{eq:linreg} ensures that \[
\dist\big(x^*,C) \le \gamma \cdot \dist\big(x^*,\partial f(\hat{x})\big)\,.
\]
Hence, together with~\eqref{eq:DL} we get
\[
D_f^{x^*}(x,\hat{x}) \le \tfrac{1}{\sigma_{\min}^+(A)} \cdot L \cdot \gamma \cdot \sqrt{\tfrac{2}{\alpha} \cdot D_f^{x^*}(x,\hat{x})} \cdot \norm[2]{Ax-b} \,,
\]
which implies~\eqref{eq:EB}.
And in case $\tfrac{\alpha}{2} \cdot \varepsilon^2 < D_f^{x^*}(x,\hat{x})$ we apply Lemma~\ref{lem:lineargrowth} to get
\begin{align*}
D_f^{x^*}(x,\hat{x}) & \le c \cdot \left( \sqrt{D_f^{x^*}(x,\hat{x})} + \norm[2]{\hat{x}} +1 \right) \cdot \norm[2]{Ax-b} \\
& \le   c \cdot \sqrt{D_f^{x^*}(x,\hat{x})} \cdot \left(1+\big(\norm[2]{\hat{x}} +1\big) \cdot \sqrt{\tfrac{2}{\alpha}} \cdot \tfrac{1}{\varepsilon} \right) \cdot \norm[2]{Ax-b} \,,
\end{align*}
which also implies~\eqref{eq:EB}.
\qed
\end{proof}

\section{Convergence analysis of the GERK method} \label{sec:convergence}

At first we consider the real case $\KK=\RR$ and prove expected convergence of the generalized extended randomized block Kaczmarz method (GERK) Algorithm~\ref{alg:GERK} to the unique solution of~\eqref{eq:OPgeneral} for suitable strongly convex functions $f$ and $g^*$.
We will derive convergence rates with the help of the following technical lemma.

\begin{lemma} \label{lem:recursion_dk}
Let $a,b>0$, $q \in (0,1)$, and $(d_k)_{k\ge 1}$ be a sequence with $d_k > 0$ and
\begin{equation} \label{eq:recursion_dk}
d_{k+1} \le d_k - a \cdot d_k^2 + b \cdot q^k \,.
\end{equation}
Then there exists some $c>0$ such that $d_k \le \frac{c}{k}$ for all $k\ge 1$.
\end{lemma}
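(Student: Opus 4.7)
The plan proceeds in three stages: first establish boundedness and square-summability of $(d_k)$; then deduce $d_k \to 0$; finally use a tail induction to obtain the $O(1/k)$ rate.

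For Stage~1, I absorb the geometric perturbation into a Lyapunov-type potential by setting $V_k := d_k + \tfrac{bq^k}{1-q}$. The identity $\tfrac{bq^k}{1-q} = bq^k + \tfrac{bq^{k+1}}{1-q}$ together with the hypothesis~\eqref{eq:recursion_dk} immediately yields $V_{k+1} \le V_k - a d_k^2$, so $V_k$ is non-increasing. Hence $d_k \le V_1 = d_1 + \tfrac{bq}{1-q}$ for all $k$, and telescoping gives $a \sum_k d_k^2 \le V_1 < \infty$. Square-summability together with positivity forces $d_k \to 0$.

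For Stage~2, I choose an integer $K \ge 4$ large enough that $d_K \le \tfrac{1}{2a}$ and $ab\, k^2 (k+1) q^k \le 2(k+2)$ for every $k \ge K$; both are possible because $d_k \to 0$ and $k^3 q^k \to 0$. Then I set
\[
c := \max\bigl(K d_K,\; \tfrac{2}{a}\bigr),
\]
which satisfies $ac \ge 2$ and, since $Kd_K \le K/(2a)$ and $1/a \le K/(2a)$, also $c/K \le \tfrac{1}{2a}$.

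Stage~3 is then the induction $d_k \le c/k$ for $k \ge K$. The base case is built into the definition of $c$. For the step, suppose $d_k \le c/k$; then $c/k \le \tfrac{1}{2a}$, so the map $\phi(x) := x - a x^2$, which is increasing on $[0, 1/(2a)]$, gives
\[
d_{k+1} \le \phi(c/k) + bq^k = \tfrac{c}{k} - \tfrac{a c^2}{k^2} + bq^k .
\]
A direct rearrangement shows that $d_{k+1} \le c/(k+1)$ is equivalent to $b k^2 (k+1) q^k \le c\bigl(ac(k+1) - k\bigr)$. Because $ac \ge 2$, the right-hand side is at least $c(k+2) \ge \tfrac{2}{a}(k+2)$, so the inequality reduces exactly to the condition built into Stage~2. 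To extend the bound from $k \ge K$ to $k \ge 1$, I use the trivial estimate $d_k \le V_1 \le V_1 K/k$ for $k < K$ and set $c' := \max(c,\, K V_1)$.

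The main obstacle is reconciling the two competing demands on $c$: the base case forces $c \ge K d_K$ (which can be arbitrarily large depending on the particular sequence), while the inductive step in a naive analysis only works for $c$ close to $1/a$. The adaptive choice $c = \max(K d_K,\, 2/a)$ resolves this cleanly; the fixed margin $ac \ge 2$ ensures that the quadratic decrement $ac^2/k^2$ strictly dominates the linear drift $\tfrac{c}{k(k+1)}$, leaving comfortable room for the exponentially small perturbation $bq^k$ to be absorbed.
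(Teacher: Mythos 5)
Your proof is correct, and it takes a genuinely different route from the paper's. You first establish $d_k \to 0$ via the Lyapunov potential $V_k = d_k + \tfrac{bq^k}{1-q}$, which absorbs the geometric perturbation ($bq^k + \tfrac{bq^{k+1}}{1-q} = \tfrac{bq^k}{1-q}$) and yields monotonicity, boundedness and square-summability; you then run a forward induction on $d_k \le c/k$ from a large index $K$, using the monotonicity of $\phi(x)=x-ax^2$ on $[0,\tfrac{1}{2a}]$ and the adaptive constant $c=\max(Kd_K,\tfrac{2}{a})$. The paper avoids any preliminary convergence statement: it fixes $c$ up front so that $\sqrt{\tfrac{2b}{a}q^k}+bq^k \le \tfrac{c}{k+1}$, and splits the indices into those where $d_k^2 \le \tfrac{2b}{a}q^k$ (where the bound is immediate) and the intervening stretches where $bq^k < \tfrac{a}{2}d_k^2$, on which the perturbation is absorbed into half of the quadratic decrement so that the classical reciprocal-telescoping argument for $d_{k+1}\le d_k-\tfrac{a}{2}d_k^2$ applies. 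Your two-stage scheme is the more standard template and keeps every estimate elementary, at the price of a constant that depends on the sequence through $K$ and $d_K$ rather than only through $d_1$, $a$, $b$, $q$; the paper's case split is slightly more delicate to organize but obtains the rate in a single pass and makes explicit exactly when the geometric term is negligible relative to the quadratic decrease. The individual steps of your argument all check out: the equivalence of $d_{k+1}\le \tfrac{c}{k+1}$ with $bk^2(k+1)q^k \le c\bigl(ac(k+1)-k\bigr)$ after applying $\phi$, the lower bound $c\bigl(ac(k+1)-k\bigr)\ge \tfrac{2}{a}(k+2)$ from $ac\ge 2$ and $c\ge\tfrac{2}{a}$, and the extension to $k<K$ via $c'=\max(c,KV_1)$.
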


\begin{proof}
To $q \in (0,1)$ we find some $c>0$ such that for all $k \ge 1$ we have
\begin{equation} \label{eq:qk}
\sqrt{\tfrac{2b}{a} \cdot q^k} + b \cdot q^k \le \frac{c}{k+1} \,.
\end{equation}
At first we assume that there are infinitely many indices $k_j$ (in increasing order) for which $d_{k_j}^2 \le \tfrac{2b}{a} \cdot q^{k_j}$.
From~\eqref{eq:recursion_dk} and~\eqref{eq:qk} we infer that for these indices we have $d_{k_j} \le \sqrt{\tfrac{2b}{a} \cdot q^{k_j}} \le \frac{c}{k_j}$ and
\begin{equation} \label{eq:dkj+1}
d_{k_j+1} \le d_{k_j} +  b \cdot q^{k_j} \le \sqrt{\tfrac{2b}{a} \cdot q^{k_j}} + b \cdot q^{k_j}  \le \frac{c}{k_j+1}\,.
\end{equation}
Furthermore, in case $k_{j+1}>k_j+1$, for all $k=k_j+1,\ldots,k_{j+1}-1$ we have $b \cdot q^k < \tfrac{a}{2} \cdot d_k^2$, and thus~\eqref{eq:recursion_dk} yields the recursion
\begin{equation} \label{eq:d_k+1}
d_{k+1} \le d_k - a \cdot d_k^2 + b \cdot q^k \le d_k - \tfrac{a}{2} \cdot d_k^2 \,.
\end{equation}
It follows that $d_{k+1} \le d_k$, and therefore division by $d_k$ and $d_{k+1}$ yields
\[
\frac{1}{d_k} \le \frac{1}{d_{k+1}} - \frac{a}{2} \cdot \frac{d_k}{d_{k+1}} \le \frac{1}{d_{k+1}} - \frac{a}{2} \,,
\]
which together with~\eqref{eq:dkj+1} implies
\[
(k-k_j-1) \cdot \frac{a}{2} \le \sum_{i=k_j+1}^{k-1} \frac{1}{d_{i+1}} - \frac{1}{d_i} = \frac{1}{d_k} - \frac{1}{d_{k_j+1}} \le \frac{1}{d_k} - \frac{k_j+1}{c}\,.
\] 
We conclude that $d_k \le \frac{1}{\min\{\frac{a}{2},\frac{1}{c}\}} \cdot \frac{1}{k}$ for all $k \ge 1$.
In the remaining case that the index set $I = \set{k\in\NN}{d_{k}^2 \le \tfrac{2b}{a} \cdot q^{k}}$ is finite or empty, the assertion follows from inequality~\eqref{eq:d_k+1} for $k\not\in I$ with a similar conclusion.
\qed
\end{proof}

\begin{algorithm}[ht]
  \caption{Generalized Extended Randomized Block Kaczmarz (GERK)}
  \label{alg:GERK}
  \begin{algorithmic}[1]
    \REQUIRE{starting points $x_0=x_{0}^{*}=0\in\RR^n$ and $z_0^*=b\in \RR^m$, $z_0=\nabla g^*(z_{0}^{*})$, matrix $A\in\RR^{m\times n}$ with $M_r$ row-blocks $0 \not=A_{i}\in \RR^{m_i \times n}$ and $N_c$ column-blocks $0 \not=\tilde{A}_{j}\in \RR^{m \times n_j}$ and probabilities $(\tilde p_{j})\in \RR^{N_{c}}$, $(p_i)\in \RR^{M_{r}}$}
    \ENSURE{(approximate) solution of\\$\min_{x \in \RR^n} f(x)$ s.t. $Ax=\hat{y}$, where $\hat{y} = \argmin_{y \in \RR^m} g^*(b-y)$ s.t. $y \in \Rcal(A)$}
    \STATE initialize $k =  0$
    \REPEAT
		\STATE choose a column-block index $j_k=j\in\{1,\dots,N_c\}$ at random with probability $\tilde{p}_j>0$
    \STATE update $z_{k+1}^* = z_{k}^* - \tilde{t}_k \cdot \tilde{A}_{j_{k}}\tilde{A}_{j_{k}}^T z_{k}$ with stepsize $\tilde{t}_k=\frac{1}{L_{g^*} \cdot \norm[2]{\tilde{A}_{j_{k}}}^2}$
		\STATE update $z_{k+1} = \nabla g^*(z_{k+1}^*)$
    \STATE choose a row-block index $i_k=i\in\{1,\dots,M_{r}\}$ at random with probability $p_i>0$
    \STATE update $x_{k+1}^* = x_{k}^* - t_k \cdot A_{i_{k}}^T (A_{i_{k}} x_{k}-b_{i_{k}}+z_{k+1,i_{k}}^*)$ with stepsize $t_k=\frac{1}{L_{f^*} \cdot \norm[2]{A_{i_{k}}}^2}$
    \STATE update $x_{k+1} = \nabla f^*(x_{k+1}^*)$
    \STATE increment $k = k+1$
    \UNTIL{a stopping criterion is satisfied}
  \end{algorithmic}
\end{algorithm}

\begin{theorem} \label{thm:GERK}
Let $g^*:\RR^m \to \RR$ be strongly convex with a Lipschitz-continuous gradient.
Then the iterates $z_k^*$ of the GERK~method from Algorithm~\ref{alg:GERK} converge in expectation to $b-\hat{y}$, where $\hat{y}\in \Rcal(A)$ is the unique solution of
\begin{equation}
\min_{y \in \RR^m} g^*(b-y) \quad \mbox{s.t.}\quad y \in \Rcal(A)\,. \label{eq:DP}
\end{equation}
If the subdifferential mapping of the strongly convex function $f:\RR^n \to \RR$ grows at most linearly,
then the iterates $x_k$ converge in expectation to the corresponding unique solution $\hat{x}$ of
\begin{equation}
\min_{x \in \RR^n} f(x) \quad \mbox{s.t.}\quad Ax=\hat{y}\,. \label{eq:P}
\end{equation}
For some $q \in (0,1)$ and $c>0$ the expected rates of convergence are
\begin{equation} \label{eq:EEzk}
\EE \left[\norm[2]{z_k^*-(b-\hat{y})}^2\right] \le c\cdot q^k  \,, 
\end{equation}
and for all $k \ge 1$
\begin{equation} \label{eq:EExk_sublinear}
\EE \left[\|x_{k}-\hat{x}\|_2^2\right] \le \frac{c}{k} \,.
\end{equation}
Moreover, if a global error bound holds at $\hat{x}$, then we even have
\begin{equation} \label{eq:EExk_linear}
\EE \left[\|x_{k}-\hat{x}\|_2^2\right]  \le c \cdot (1+k) \cdot q^k \,.
\end{equation}
\end{theorem}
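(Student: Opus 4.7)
The plan is to split the argument into three parts mirroring the three rate statements~\eqref{eq:EEzk}, \eqref{eq:EExk_sublinear}, and~\eqref{eq:EExk_linear}, and to analyse the $z$-iteration first because its output only enters the $x$-recursion as a driving noise term.

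\emph{Linear decay of $z_k^*$.} Set $\hat z^* := b - \hat y$. The optimality condition for~\eqref{eq:DP} reads $A^T \nabla g^*(\hat z^*) = 0$, and since $z_0^* = b$ and each increment lies in $\Rcal(A)$, an induction yields $z_k^* - \hat z^* \in \Rcal(A)$. I would first invoke the descent inequality~\eqref{eq:Lip} for $g^*$ at stepsize $\tilde t_k = 1/(L_{g^*}\norm[2]{\tilde A_{j_k}}^2)$ to get the pathwise bound
\[
g^*(z_{k+1}^*) \le g^*(z_k^*) - \tfrac{1}{2L_{g^*}\norm[2]{\tilde A_{j_k}}^2}\,\norm[2]{\tilde A_{j_k}^T z_k}^2,
\]
and then take conditional expectation over $j_k$. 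Decomposing $\nabla g^*(z_k^*)$ along $\Rcal(A)\oplus\Ncal(A^T)$ and exploiting $\nabla g^*(\hat z^*)\in\Ncal(A^T)$, $z_k^*-\hat z^*\in\Rcal(A)$, strong convexity of $g^*$ and the smallest positive singular value $\sigma_{\min}^+(A)$ produces a Polyak--Lojasiewicz-type estimate $\norm[2]{A^T\nabla g^*(z_k^*)}^2 \ge \kappa\bigl(g^*(z_k^*) - g^*(\hat z^*)\bigr)$ for some $\kappa>0$. Chaining gives a geometric contraction for $\EE[g^*(z_k^*) - g^*(\hat z^*)]$, from which~\eqref{eq:EEzk} follows by strong convexity.

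\emph{Recursion for the Bregman distance $V_k := D_f^{x_k^*}(x_k,\hat x)$.} Put $\hat b := A\hat x = \hat y$ and $\xi_k := z_k^* - \hat z^*$. Splitting the residual in the $x$-update as $A_{i_k}x_k - b_{i_k} + z_{k+1,i_k}^* = (A_{i_k}x_k - \hat b_{i_k}) + \xi_{k+1,i_k}$, combining Fenchel's identity for $V_k$, the descent inequality~\eqref{eq:Lip} for $f^*$ at stepsize $t_k$, and completing the square should yield the pathwise bound
\[
V_{k+1} \le V_k - \tfrac{t_k}{2}\norm[2]{A_{i_k}x_k - \hat b_{i_k}}^2 + \tfrac{t_k}{2}\norm[2]{\xi_{k+1,i_k}}^2.
\]
Taking conditional expectations first over $i_k$, then over $j_k$, and finally total expectation yields, with $D_k := \EE[V_k]$ and some $\alpha_1 > 0$ depending on $L_{f^*}$, the probabilities $p_i$ and the block norms,
\[
D_{k+1} \le D_k - \alpha_1\EE\bigl[\norm[2]{Ax_k-\hat b}^2\bigr] + \alpha_1 c\,q^{k+1},
\]
where the second noise term uses~\eqref{eq:EEzk}.

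\emph{From the recursion to the rates.} Dropping the nonnegative residual gives $D_{k+1} \le D_k + \alpha_1 c\,q^{k+1}$, hence $D_k \le B$ for some $B>0$. By construction $x_k^* \in \Rcal(A^T)$, so Lemma~\ref{lem:lineargrowth} applies pathwise; setting $\alpha := \norm[2]{\hat x}+1$ and $\phi(v) := v^2/\bigl(2c_l^2(v+\alpha^2)\bigr)$, solving the lemma for the residual yields $\norm[2]{Ax_k - \hat b}^2 \ge \phi(V_k)$. A second-derivative check shows $\phi$ is convex on $[0,\infty)$, so Jensen gives $\EE[\norm[2]{Ax_k-\hat b}^2] \ge \phi(D_k)$; using $D_k \le B$ to linearise the denominator, the recursion reduces to $D_{k+1} \le D_k - a D_k^2 + b\,q^{k+1}$, and Lemma~\ref{lem:recursion_dk} delivers $D_k \le c/k$, hence~\eqref{eq:EExk_sublinear} after invoking strong convexity of $f$. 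If the global error bound~\eqref{eq:EB} of Theorem~\ref{thm:EB} holds at $\hat x$, then the stronger estimate $\norm[2]{Ax_k-\hat b}^2 \ge \gamma(\hat x)^{-1} V_k$ is available, the recursion becomes $D_{k+1} \le (1-\rho')D_k + \alpha_1 c\,q^{k+1}$, and its explicit solution gives~\eqref{eq:EExk_linear}.

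The principal obstacle is the stochastic coupling between the two randomisations: the noise $\norm[2]{\xi_{k+1}}^2$ driving the $x$-iteration is random and only decays in expectation, so pathwise control of $V_k$ is unavailable and one cannot simply plug the pointwise error bound of Lemma~\ref{lem:lineargrowth} into the recursion. The key maneuver is to first bootstrap boundedness of $D_k$ from the crude estimate $D_{k+1} \le D_k + b q^{k+1}$, and then to exploit the convexity of the auxiliary function $\phi$ derived from Lemma~\ref{lem:lineargrowth} to lift its pointwise conclusion to the level of expectations via Jensen; at this point the scalar recursion of Lemma~\ref{lem:recursion_dk} closes the argument.
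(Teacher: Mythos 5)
Your proposal is correct, and its core (the analysis of the $x$-iterates) follows essentially the same route as the paper: the identical pathwise recursion $V_{k+1}\le V_k-\tfrac{t_k}{2}\norm[2]{A_{i_k}x_k-\hat y_{i_k}}^2+\tfrac{t_k}{2}\norm[2]{\xi_{k+1,i_k}}^2$, the bootstrap of boundedness of $\EE[V_k]$ from the crude inequality, Lemma~\ref{lem:lineargrowth} to relate the Bregman distance to the residual, Lemma~\ref{lem:recursion_dk} for the sublinear rate, and the global error bound for the linear rate. Two points differ in packaging rather than substance. First, for the $z$-iteration the paper substitutes $\tilde z_k^*=z_k^*-b$, recognizes the iteration as the Randomized Sparse Kaczmarz method applied to the dual problem $\min_z g(z)-\scp{b}{z}$ s.t. $A^Tz=0$, and imports the linear rate from Theorem~5.5 of~\cite{SL19}; you instead give a direct descent-plus-Polyak--Lojasiewicz argument on $g^*$ restricted to the affine set $b+\Rcal(A)$, which is precisely the randomized coordinate-descent viewpoint the paper mentions in passing (via~\cite{NC16,ZY13}) but does not carry out --- your version is more self-contained, the paper's is shorter. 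Second, to pass from Lemma~\ref{lem:lineargrowth} to the quadratic recursion $D_{k+1}\le D_k-aD_k^2+bq^{k+1}$ you invert the lemma into a convex function $\phi$ of $V_k$ and apply Jensen, whereas the paper applies Cauchy--Schwarz to $\EE\big[\sqrt{V_k}\,\norm[2]{Ax_k-\hat y}\big]$ and then uses boundedness of $\EE[V_k]$; both yield $(\EE[V_k])^2\le c\,\EE[\norm[2]{Ax_k-\hat y}^2]$ and both correctly address the key obstacle you identify, namely that the error bound is only available pathwise while the recursion lives at the level of expectations.
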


\begin{proof}
We split the proof into two parts.
In the first part we show convergence of the iterates $z_k^*$ , and in the second part we show convergence of the iterates $x_k$.\\
\textbf{Part 1}: 
At first we note that the iterates $z_k^*$ are independent from $x_k$ and $i_k$, so that convergence of the $z_k^*$ can be analyzed separately.
In fact, the first part of our method may be reformulated and interpreted as a randomized coordinate descent algorithm~\cite{nesterov2012efficiency} for the problem $\min_{x \in \RR^n} g^*(b-Ax)$, and for the constant stepsizes $\tilde{t}_k$ the linear convergence of the function values follows from Theorem 5.4 under Assumption 2 in~\cite{NC16} together with the error bound in the remark after Theorem 2 in~\cite{ZY13}.
For convenience, and since we also need it for the discussion of non-constant stepsizes in Remark~\ref{rem:stepsize}, here we give a short convergence proof with the help of the results in~\cite{SL19} adapted to the present situation.
The assumptions on $g^*$ imply that the conjugate $g=(g^*)^*$ is also strongly convex with a Lipschitz-continuous gradient.
Hence, this also holds for the objective function $h(z):=g(z)-\scp{b}{z}$ of the dual to~\eqref{eq:DP},
\begin{equation}
\min_{z \in \RR^m} h(z)=g(z)-\scp{b}{z} \quad \mbox{s.t.}\quad A^T z=0 \,.\label{eq:OPg}
\end{equation}
Set $\tilde{z}_k^*:=z_k^*-b$.
Then we have $\tilde{z}_0^*=0$ and $\nabla h^*(\tilde{z}_k^*)=\nabla g^*(\tilde{z}_k^*+b)=\nabla g^*(z_k^*)$.
Hence, the iteration can be written in the form
\[
\tilde{z}_{k+1}^* = \tilde{z}_{k}^* - \tilde{t}_k \cdot \tilde{A}_{j_{k}}\tilde{A}_{j_{k}}^T z_{k} \quad, \quad z_{k+1} = \nabla h^*(\tilde{z}_{k+1}^*)
\]
with initial value $\tilde{z}_0^*=0$.
By Theorem 5.5 in~\cite{SL19} the iterates $z_k$ converge in expectation to the unique solution $\hat{z}$ of~\eqref{eq:OPg} with rate $\EE \left[\|z_{k}-\hat{z}\|_2^2\right]  \le c \cdot q^k$.
By duality and comparison of the optimality conditions of convex programs (cf. Example in~\cite{RW09}), the solution $\hat{y}$ of~\eqref{eq:DP} and the solution $\hat{z}$ of~\eqref{eq:OPg} are related by $\nabla g(\hat{z})=b-\hat{y}$.
Expected convergence of the iterates $z_k^*$ to $b-\hat{y}$ with rate~\eqref{eq:EEzk} then follows from the estimate
\[
\norm[2]{z_k^*-(b-\hat{y})}=\norm[2]{\nabla g(z_k)-\nabla g(\hat{z})} \le L_g \cdot \norm[2]{z_k-\hat{z}} \,.
\]
\textbf{Part 2}: Let $w_k:=A_{i_{k}} x_{k}-b_{i_{k}}+z^*_{k+1,i_{k}}$.
By Definition~\ref{def:D} of the Bregman distance, and since $A_{i_{k}}\hat{x}=\hat{y}_{i_{k}}$, we have
\[
D_f^{x_{k+1}^*}(x_{k+1} ,\hat{x}) = f^*\left(x_{k}^{*} - t_k \cdot A_{i_{k}}^T w_k\right) - \scp{x_{k}^{*}}{\hat{x}} + t_k \cdot \scp{w_k}{\hat{y}_{i_{k}}}  + f(\hat{x})\,.
\]
Using estimate~\eqref{eq:Lip} for $f^*$ yields
\[
D_f^{x_{k+1}^*}(x_{k+1} ,\hat{x}) \le D_f^{x_k^*}(x_k ,\hat{x}) - t_k \cdot \scp{w_k}{A_{i_{k}} x_{k}-\hat{y}_{i_{k}}} + \tfrac{L_{f^*}}{2} \cdot t_k^2 \cdot \norm[2]{A_{i_{k}}^T w_k}^2 \,.
\]
Since $t_k^2=\frac{1}{L_{f^*}^2 \cdot \norm[2]{A_{i_{k}}}^4}$ and $\norm[2]{A_{i_{k}}^T w_k}^2 \le \norm[2]{A_{i_{k}}^T}^2 \cdot\norm[2]{w_k}^2$, we get
\[
D_f^{x_{k+1}^*}(x_{k+1} ,\hat{x}) \le D_f^{x_k^*}(x_k ,\hat{x}) - t_k \cdot \scp{w_k}{A_{i_{k}} x_{k}-\hat{y}_{i_{k}}} + \tfrac{t_k}{2}\cdot \norm[2]{w_k}^2 \,.
\]
We rewrite the last two summands as
\[
\scp{w_k}{A_{i_{k}} x_{k}-\hat{y}_{i_{k}}} = \norm[2]{A_{i_{k}} x_{k}-\hat{y}_{i_{k}}}^2 + \scp{\hat{y}_{i_{k}}-b_{i_{k}}+z^*_{k+1,i_{k}}}{A_{i_{k}} x_{k}-\hat{y}_{i_{k}}}
\]
and 
\begin{align*}
\tfrac{1}{2} \cdot\norm[2]{w_k}^2 &= \tfrac{1}{2} \cdot\norm[2]{A_{i_{k}} x_{k}-\hat{y}_{i_{k}}}^2 + \scp{\hat{y}_{i_{k}}-b_{i_{k}}+z^*_{k+1,i_{k}}}{A_{i_{k}} x_{k}-\hat{y}_{i_{k}}} \\
& \quad + \tfrac{1}{2} \cdot\norm[2]{\hat{y}_{i_{k}}-b_{i_{k}}+z^*_{k+1,i_{k}}}^2
\end{align*}
to get
\begin{align}
\label{eq:RecursionWithoutExpectation_sparse}
D_f^{x_{k+1}^*}(x_{k+1} ,\hat{x})  \le  D_f^{x_k^*}(x_k ,\hat{x}) - \tfrac{t_k}{2} \cdot\norm[2]{A_{i_{k}} x_{k}-\hat{y}_{i_{k}}}^2 +\tfrac{t_k}{2} \cdot \norm[2]{\hat{y}_{i_{k}}-b_{i_{k}}+z^*_{k+1,i_{k}}}^2 \,.
\end{align}
Set $\displaystyle c_1:=\min_{i=1,\ldots,M_r} \tfrac{p_i}{2 \cdot L_{f^*} \cdot \norm[2]{A_i}^2}$ and $\displaystyle  c_2:=\max_{i=1,\ldots,M_r} \tfrac{p_i}{2 \cdot L_{f^*} \cdot \norm[2]{A_i}^2}$.
Then we have $0<c_1 \le p_i \cdot \tfrac{t_k}{2} \le c_2$ for all $i=1,\ldots,M_r$. Averaging \eqref{eq:RecursionWithoutExpectation_sparse} over the random variables $i_0,j_0,...,i_{k-1},j_{k-1}$ and using linearity of the expectation, we obtain the recursion
\[
\EE \left[D_f^{x_{k+1}^*}(x_{k+1} ,\hat{x})\right]  \le \EE \left[D_f^{x_{k}^*}(x_{k} ,\hat{x})\right]  - c_1 \cdot \EE \left[\norm[2]{A x_k-\hat{y}}^2\right]+ c_2 \cdot \EE \left[\norm[2]{\hat{y}-b+z^*_{k+1}}^2\right]\,.
\]
Using~\eqref{eq:EEzk} we arrive at
\begin{equation}
\EE \left[D_f^{x_{k+1}^*}(x_{k+1} ,\hat{x})\right] \le \EE \left[D_f^{x_{k}^*}(x_{k} ,\hat{x})\right] - c_1 \cdot \EE \left[\norm[2]{A x_k-\hat{y}}^2\right] + c_2 \cdot c \cdot q^{k+1}\,. \label{eq:EErec}
\end{equation}
This recursion implies boundedness of $\EE \left[D_f^{x_{k}^*}(x_{k} ,\hat{x})\right]$, because by the choice $x_0=x_{0}^{*}=0$ the initial Bregman distance $D_f^{x_0^*}(x_0,\hat{x}) = f(\hat{x})$ is finite.
For ease of notation, in the following we use a generic constant $c>0$ that is independent of the iteration index $k$ and the random choices of the algorithm. 
By Lemma~\ref{lem:lineargrowth}, the linear growth assumption on $\partial f$ implies
\begin{align*}
\EE \left[D_f^{x_{k}^*}(x_{k} ,\hat{x})\right]  &\le c \cdot \EE \left[\sqrt{D_f^{x_{k}^*}(x_{k} ,\hat{x})} \cdot \norm[2]{Ax_k-\hat{y}}\right] + c \cdot \EE \left[\norm[2]{Ax_k-\hat{y}} \right] \\
 &\le  c \cdot \sqrt{\EE \left[D_f^{x_{k}^*}(x_{k} ,\hat{x})\right]} \cdot \sqrt{\EE \left[\norm[2]{Ax_k-\hat{y}}^2\right]} + c \cdot \EE \left[\norm[2]{Ax_k-\hat{y}} \right] \\
&\le   c \cdot \sqrt{\EE \left[\norm[2]{Ax_k-\hat{y}}^2\right]}\,,
\end{align*}
which yields
\[
\left(\EE \left[D_f^{x_{k}^*}(x_{k} ,\hat{x})\right]\right)^2 \le c \cdot \EE \left[\norm[2]{Ax_k-\hat{y}}^2\right]\,.
\]
We insert this inequality into recursion~\eqref{eq:EErec} to get
\[
\EE \left[D_f^{x_{k+1}^*}(x_{k+1} ,\hat{x})\right] \le \EE \left[D_f^{x_{k}^*}(x_{k} ,\hat{x})\right] - c \cdot \left(\EE \left[D_f^{x_{k}^*}(x_{k} ,\hat{x})\right]\right)^2 + c \cdot q^{k+1}\,.
\]
The sublinear convergence rate~\eqref{eq:EExk_sublinear} then follows from Lemma~\ref{lem:recursion_dk}.
Now we turn to the asymptotically better rate~\eqref{eq:EExk_linear} under the stronger assumption that a global error bound of the form
\[
D_f^{x_{k}^*}(x_{k} ,\hat{x}) \le \gamma \cdot \norm[2]{A x_k-\hat{y}}^2
\]
holds with some constant $\gamma>0$.
We set $q_1:=\max\{0,1-c_1/ \gamma\}$.
Then we have $q_1 \in [0,1)$, and inserting the error bound into~\eqref{eq:EErec} we get
\[
\EE \left[D_f^{x_{k+1}^*}(x_{k+1} ,\hat{x})\right] \le q_1 \cdot \EE \left[D_f^{x_{k}^*}(x_{k} ,\hat{x})\right] + c_2 \cdot c \cdot q^{k+1}\,.
\]
Finally, we set $\tilde{q}:=\max\{q_1,q\}$ and conclude inductively that
\[
\EE \left[D_f^{x_{k}^*}(x_{k} ,\hat{x})\right] \le c \cdot \tilde{q}^k + c \cdot k \cdot \tilde{q}^k \,,
\]
from which the rate~\eqref{eq:EExk_linear} follows by~\eqref{eq:D}.
\qed
\end{proof}

\begin{remark} \label{rem:stepsize}
According to~\cite{SL19}, the stepsize $\tilde{t}_k$ for the $z^*_k$-update in line 4 of Algorithm~\ref{alg:GERK} may also be chosen as
\[
\tilde{t}_k=\tfrac{1}{L_{g^*}} \cdot \tfrac{\norm[2]{\tilde{A}_{j_k}^T z_k}^2}{\norm[2]{\tilde{A}_{j_k} \tilde{A}_{j_k}^T z_k}^2}
\]
or determined by an exact linesearch.
But so far we do not know whether we can also choose the stepsize $t_k$ for the $x^*_k$-update in line 7 by an exact linesearch or as $t_k=\frac{1}{L_{f^*}} \cdot \frac{\norm[2]{w_k}^2}{\norm[2]{A_{i_{k}}^T w_k}^2}$ with $w_k:=A_{i_{k}} x_{k}-b_{i_{k}}+z^*_{k+1,i_{k}}$.
The main problem with this choice here seems to be that we only have a lower estimate $t_k \ge \frac{1}{L_{f^*} \cdot \norm[2]{A_{i_{k}}}^2}$, but after inequality~\eqref{eq:RecursionWithoutExpectation_sparse} in the above proof we would also need a suitable upper estimate (note that $w_k$ need not be contained in $\Rcal(A_{i_{k}})$).
\end{remark}

To apply Theorem~\ref{thm:GERK} in the complex case $\KK=\CC$, we just split the variables into real and imaginary parts.
In this way, a complex linear system $Ax=b$ can equivalently be written as a real linear system of the form
\[
\begin{pmatrix}
\Re(A) &, -\Im(A) \\
\Im(A) &, \Re(A)
\end{pmatrix}
\cdot
\begin{pmatrix}
\Re(x) \\ \Im(x)
\end{pmatrix}
=
\begin{pmatrix}
\Re(b) \\ \Im(b)
\end{pmatrix}
\]
and a vector update as in lines 4 and 7 of Algorithm~\ref{alg:GERK} for a complex vector then corresponds to block updates of the real and imaginary parts.
But we must take some care when we consider a function $f:\CC^n\to \RR$ in complex variables as a function $f:\RR^{2n} \to \RR$ in real variables.
In particular, there is a notable subtlety regarding the sparsity promoting function $f(x) = \lambda \cdot \norm[1]{x} + \tfrac{1}{2} \cdot \norm[2]{x}^{2}$ for complex vectors $x \in \CC^n$.
Considering it as a real function of the form
\[
f\big(\Re(x),\Im(x)\big)=\lambda \cdot \big(\norm[1]{\Re(x)} +\norm[1]{\Im(x)}\big) + \tfrac{1}{2} \cdot \big(\norm[2]{\Re(x)}^{2} +\norm[2]{\Im(x)}^{2}\big) \,,
\]
the gradient $\nabla f^*$ of the conjugate function would just be componentwise shrinkage of the vector $\big(\Re(x),\Im(x)\big)$, i.e. sparsity of the real and imaginary part is enforced seperately.
On the one hand, this means that sparsity of the real vector $\big(\Re(x),\Im(x)\big)$ does not necessarily imply sparsity of the complex vector $x$.
On the other hand, a global error bound is guaranteed to hold, cf.~Examples~\ref{exmp:calm} (a),~\ref{exmp:linreg} (a), and~\ref{exmp:LinearGrowth}.
A more suitable way to enforce sparsity of a complex vector seems to be to just use the complex $\ell_1$-norm, i.e.
\[
f\big(\Re(x),\Im(x)\big)=\lambda \cdot \sum_{j=1}^n \sqrt{\big(\Re(x_j)\big)^2+\big(\Im(x_j)\big)^2} + \tfrac{1}{2} \cdot \big(\norm[2]{\Re(x)}^{2} +\norm[2]{\Im(x)}^{2}\big) \,,
\]
where, by~Examples~\ref{exmp:calm} (c) and (d), the gradient $\nabla f^*$ of the conjugate function amounts to componentwise shrinkage of the complex vector $x$,
\begin{equation} \label{eq:shrinkage_complex}
\Big(\:\big(\nabla f^*(x)\big)_j \: \hat{=}\:\Big) \quad \quad \big(S_{\lambda}(x)\big)_j = \max\{|x_j|-\lambda,0\} \cdot \tfrac{x_j}{|x_j|}  \quad , x \in \CC^n \,,
\end{equation}
i.e. sparsity of the real and imaginary part is enforced simultaneously.
But since this is a special form of group sparsity, we can guarantee a global error bound, and hence the better rate~\eqref{eq:EExk_linear}, only under an additional regularity assumption as in Example~\ref{exmp:linreg} (b).
\begin{remark}
\label{rem:ComplexGREKComplexOperations}
Algorithm~\ref{alg:GERK} can also be directly implemented with complex number operations.
We just have to replace the transposed matrices $\tilde{A}_{j_{k}}^T$ and $A_{i_{k}}^T$ in lines 4 and 7 by the complex adjoints $\overline{\tilde{A}}_{j_{k}}^T$ and $\overline{A}_{i_{k}}^T$, respectively.
The updates in lines 5 and 8 must be performed by replacing the real gradient mappings $\nabla g^*$ and $\nabla f^*$ with the corresponding complex operators, e.g. using the complex shrinkage operator~\eqref{eq:shrinkage_complex}, cf. \cite{Bur16, Sar20}.
Note that the expressions for the Huber function and its gradient in Example~\ref{exmp:Huber} are also meaningful for complex vectors $x$, and the corresponding real function is still strongly convex and has a Lipschitz-continuous gradient.
\end{remark}

\begin{example}
\label{ex:Examples_g_star}
Here are some concrete choices for the functions $f$ and $g^*$ that can be used in both the real and complex case $\KK=\RR$ or $\KK=\CC$, so that the assumptions in Theorem~\ref{thm:GERK} are fulfilled.
We indicate by (RA) if a regularity assumption as in Example~\ref{exmp:linreg} (b) is needed for $f$ to ensure a global error bound and hence the better rate~\eqref{eq:EExk_linear}.
\begin{enumerate}
    \item \textbf{(Least squares)} $g^*(y)=\tfrac{1}{2} \cdot \norm[2]{y}^2$
    \item \textbf{(Impulsive noise)} $g^*(y) = r_{\varepsilon}(y) + \tfrac{\tau}{2} \cdot \norm[2]{y}^2$ with the Huber function $r_{\varepsilon}$
    \item \textbf{(Minimum $2$-norm)} $f(x)=\tfrac{1}{2} \cdot \norm[2]{x}^2$
    \item \textbf{(Sparsity, (RA) needed only for $\KK=\CC$)} $f(x) = \lambda \cdot \norm[1]{x} + \tfrac{1}{2} \cdot \norm[2]{x}^{2}$ 
    \item \textbf{(Group sparsity (RA))} $f(x) = \lambda \cdot \sum_{j=1}^K \norm[2]{x_j} + \tfrac{1}{2} \cdot \norm[2]{x}^{2}$
    \item \textbf{(Low rank matrices (RA))} $f(X)=\lambda \norm[*]{X} + \tfrac{1}{2}\norm[F]{X}^2$
\end{enumerate}
\end{example}

Note that instead of $f(x) = \lambda \cdot \norm[1]{x} + \tfrac{1}{2} \cdot \norm[2]{x}^{2}$ we could also use $f(x) = r_{\varepsilon}(x) + \tfrac{\tau}{2} \cdot \norm[2]{x}^2$ as sparsity promoting function, as was done in~\cite{P15}.
But this requires tuning the two parameters $\varepsilon$,$\tau$ instead of only $\lambda$.
On the contrary, so far we could not prove convergence for non-smooth data misfit functions $g^*$, so that we cannot use $g^*(y) = \lambda \cdot \norm[1]{y} + \tfrac{1}{2} \cdot \norm[2]{y}^{2}$ for impulsive noise.

\section{Numerical examples} \label{sec:NumericalExamples}
In this part, we report numerical results of Algorithm \ref{alg:GERK} (GERK) for multiple settings and compare with the Sparse Randomized Kaczmarz method~(SRK) from~\cite{LWSM14} and the Extended Randomized Kaczmarz method~(REK) from~\cite{ZF13}. All examples are run in \texttt{MATLAB 2019b} on a computer with an Intel Core i7 processor with 4 cores at 1,2 GHz and 16 GB RAM. 

We consider two kinds of experiments, one to find sparse least squares solutions (which assume normally distributed noise) and one in which we aim to find sparse solutions to inconsistent systems under impulsive noise. 
\begin{enumerate}
\item[(i)] \textbf{Least squares solutions:} In a first experiment, we want to find sparse solutions of the least squares problem $\min\|Ax-b\|_2$ for the real and complex case. We use the functions $g^*(y) = \tfrac12\norm{y}^{2}$ from Example \ref{ex:Examples_g_star}(a) and $f(x) = \lambda\norm[1]{x}+\tfrac12\norm[2]{x}^{2}$ from Example \ref{ex:Examples_g_star}(d) and hence, we refer to the resulting method as GERK-(a,d).

  Similar to~\cite{Du19} we fix some dimensions $m$ and $n$ and some rank $r<\min(m,n)$ and construct $A\in\RR^{m\times n}$ (or $\CC^{m\times n}$, respectively) as follows. For two matrices $U,V$ with orthonormal columns (generated with the MATLAB command \texttt{orth}), we set $A=U\Sigma V^T$ with a diagonal matrix $\Sigma$ which $r$ nonzero entries on the diagonal which were sampled from the uniform distribution on $[0.001, 100]$. Then we construct a sparse $\hat x$ with normally distributed non-zero entries and set
  \begin{align*}
\hat b = A\hat{x}, \quad b = \hat b + \eta_{\mathcal R(A)^\perp}
  \end{align*}
  with noise $\eta_{\mathcal R(A)^\perp} = Nv \in \mathcal{R}(A)^\perp = \mathcal{N}(A^*)$, where the columns of $N$ form an orthonormal basis of $\mathcal N(A^*)$ and $v$ is a random vector uniformly distributed on a sphere $\partial\mathcal B_\rho(0)$ with radius $\rho=\alpha \|\hat b\|_2$ and a factor $\alpha$. The factor $\alpha$ is exactly the relative noise level.

Note that we do not consider a matrix with full rank due to the following reason:
If $m\leq n$ and $A$ has full rank, it holds $\mathcal{R}(A) = \RR^m$. If $m\geq n$, the matrix $A^TA$ is invertible and  the least-squares solution of the possibly inconsistent system $Ax=b$ is unique. In both cases, the sparse solution can be found either by the existing randomized sparse Kaczmarz method or by the existing randomized extended Kaczmarz method. \\

Figure~\ref{fig:noise_RAc_real_err} and Figure~\ref{fig:noise_RAc_real_x_components} show the results for the real case for 50 runs with $m=1000$, $n=500$, rank $r = 250$, sparsity $s = 25$, noise level $\alpha=5$, penalty $\lambda=5$ and uniform probabilities $p$ and $\tilde p$. In Table~\ref{tab:noise_RAc} we report the sparsity of the last iterates (min, median and max). One observes that the randomized sparse Kaczmarz method (blue) does not find any least squares  solution and in fact, the method does not even converge (because the system is inconsistent). The randomized extended Kaczmarz method (black) does indeed find a least squares solution, but fails to find the sparse one. The GERK-(a,d) method does converge to a sparse least squares solution and indeed recovers $\hat x$. Moreover, GERK-(a,d) is even faster than the standard randomized extended Kaczmarz method. In Table~\ref{tab:noise_RAc} we note that the last iterates of the sparse randomized Kaczmarz method are not as sparse as they should be, while GERK-(a,d) is able to produce sparser solutions. Figure~\ref{fig:noise_RAc_complex_err} and Figure~\ref{fig:noise_RAc_complex_x_components} report the result for the complex case in the same setup and we can draw the same conclusion.

\begin{figure}[htb]
  \centering 
  \includegraphics{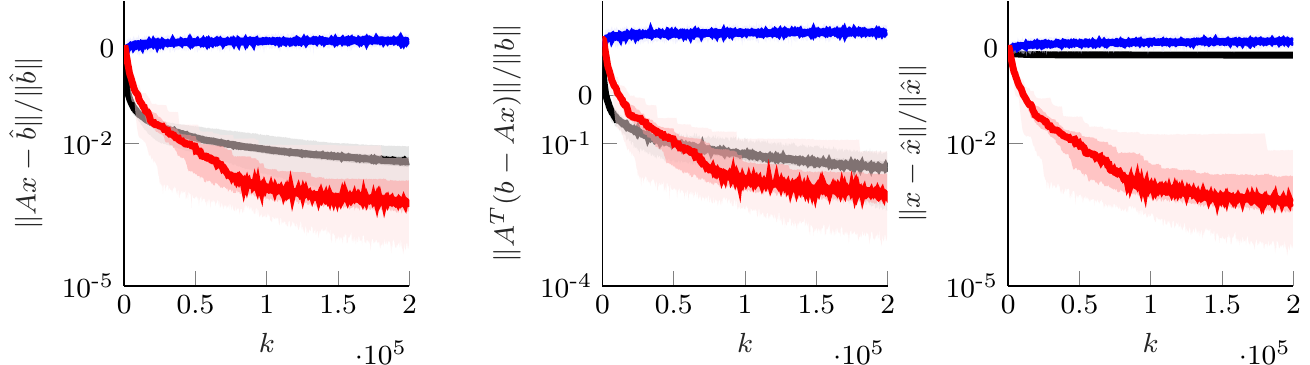}
  \caption{
  A comparison of real randomized extended Kaczmarz (black), randomized sparse Kaczmarz (blue) and GERK(a,d) method (red). Experiment (i) with $m = 1000, n = 500,$ sparsity$=25$, rank $r=250$, $\alpha=5$, $\lambda=5$, uniform probabilities $p$, $\tilde p$ and 50 repeats. 
  Left: Plot of relative residual $\|Ax-\hat b\|_2/\|\hat b\|_2$, middle: Plot of relative gradient norm $\|A^T(b-Ax)\|_2/\|b\|_2$, right: plot of relative distance $\|x-\hat x\|_2/\|\hat x\|_2$ to the initial sparse solution $\hat x$. Thick line shows median over all trials, light area is between min and max, darker area indicates 25th and 75th quantile}
  \label{fig:noise_RAc_real_err}
\end{figure}

\begin{figure}[htb]
  \centering
  \includegraphics{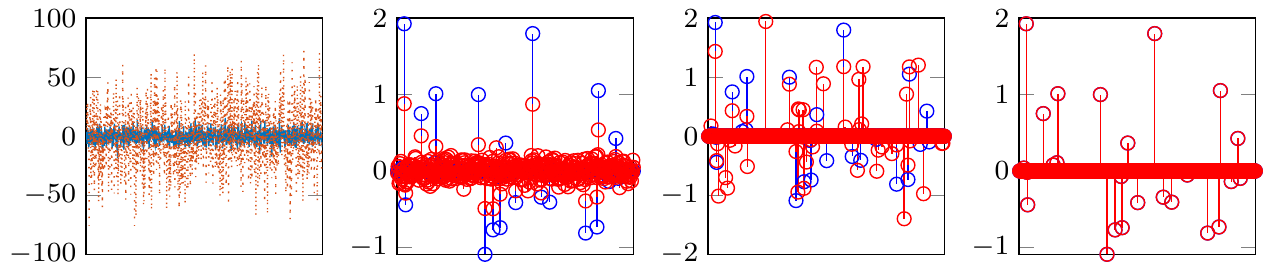}
  \caption{Approximated solution in the experiment from Figure \ref{fig:noise_RAc_real_err}. Left: Plot of $\hat b$ (blue) and noisy $b$ (red), right: Plot of $\hat x$ (blue) and last iterate $x$ (red) of randomized extended Kaczmarz, randomized sparse Kaczmarz and GERK-(a,d) method}
  \label{fig:noise_RAc_real_x_components}
\end{figure}

\begin{figure}[htb]
  \centering
  \includegraphics{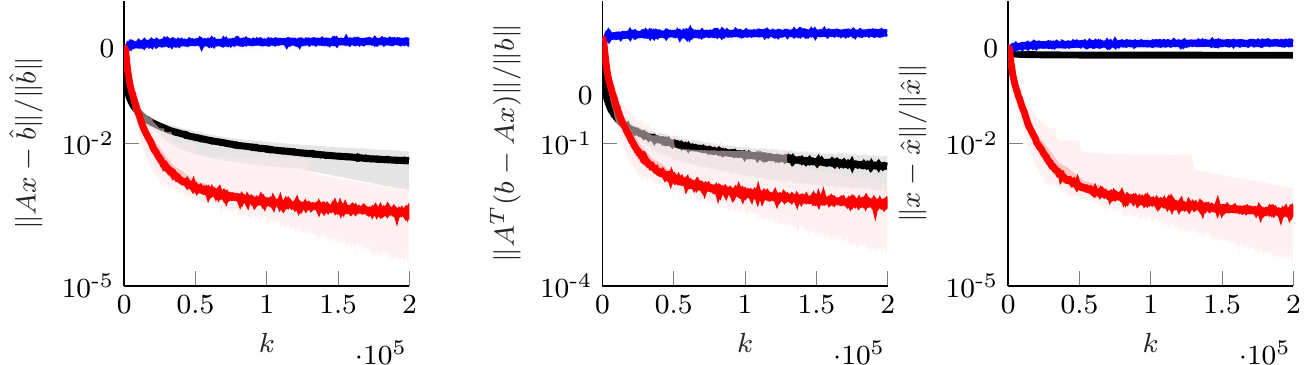}
  \caption{
  Experiment (i) with complex $A$, $b$ and $\hat x$ and the complex method, cf. Remark \ref{rem:ComplexGREKComplexOperations}, with parameters as in Figure \ref{fig:noise_RAc_real_err}.}
  \label{fig:noise_RAc_complex_err}
\end{figure}

\begin{figure}[htb]
  \centering
  \includegraphics{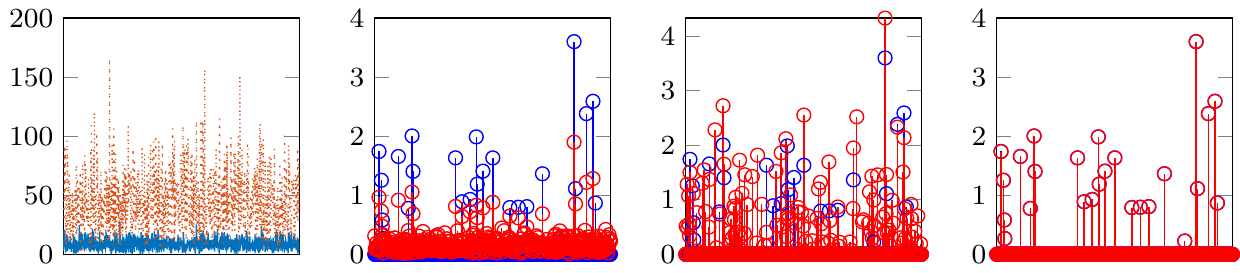}
    \caption{Approximated solution in the experiment from Figure \ref{fig:noise_RAc_complex_err}, only absolute values. Left: Plot of $\hat b$ (blue) and noisy $b$ (red), right: Plot of $\hat x$ (blue) and last iterate $x$ (red) of randomized extended Kaczmarz, randomized sparse Kaczmarz and GERK-(a,d) method}
    \label{fig:noise_RAc_complex_x_components}
\end{figure}

\begin{table}[htb]
    \centering
    \begin{tabular}{ccc}
    \toprule
        Algorithm & Figure \ref{fig:noise_RAc_real_x_components} & Figure \ref{fig:noise_RAc_complex_x_components} \\ \midrule
        REK & 499/500/500 & 500/500/500 \\ 
        SRK & 56/75.5/99 & 92/123/159 \\ 
        GERK-(a,d) & 25/27/42 & 25/26/31 \\ \bottomrule
    \end{tabular}
    \caption{Sparsity of last iterates (\#$|x_{N,i}|>10^{-5}$) in Figures \ref{fig:noise_RAc_real_x_components} and \ref{fig:noise_RAc_complex_x_components} (min/median/max)}\label{tab:noise_RAc}
\end{table}

Note that we did not report results on noise in the range of $A$. The method still works in this case and converges to some approximate least squares solution with error in the order of the level of the noise in $\mathcal R(A)$.

\item[(ii)] \textbf{Impulsive noise:} In a second experiment, we use mainly the same setup as in the the first experiment, but instead of noise in $\mathcal R(A)^\perp$ we add impulsive noise and use Algorithm \ref{alg:GERK} with $g^*(y) = r_{\varepsilon}(y) + \tfrac\tau2\norm[2]{y}^{2}$ from Example~\ref{ex:Examples_g_star}(b) and $f$ from Example \ref{ex:Examples_g_star}(d) (as in the first experiment). More concretely, after choosing a sparse solution $\hat x$ as in (i) we set 
\begin{align*}
    \hat b = A\hat{x}, \quad b = \hat b + \eta_{\mathrm{impulsive}},
\end{align*} 
where we generate $\eta_{\mathrm{impulsive}}$ by choosing a random subset $I\subset\{1,...,n\}$ with $\tilde n = \lceil n/20 \rceil$ many elements and setting 
\begin{align*}
    \big(\eta_{\mathrm{impulsive}}\big)_i = \begin{cases}
    s_i\cdot \alpha \cdot \norm[\infty]{\hat b}, & i\in I, \\
    0, & \text{otherwise}
    \end{cases}
\end{align*}
with random signs $s_i\in\{-1,1\}$. In our experiments, we have set $\alpha = 5.$
For the matrix $A$ we choose singular values in $[0.001,10]$ and use $\varepsilon = 10^{-2}$ and $\tau=10^{-3}$, $m=1000$, $n=500$, $\lambda=10$, sparsity $25$ and rank $250$.
The results of 50 trials of the real case are reported in Figure~\ref{fig:impulsive_noise_real_err} and Figure~\ref{fig:impulsive_noise_real_x_components}. In Figure~\ref{fig:impulsive_noise_complex_err} and Figure~\ref{fig:impulsive_noise_complex_x_components} we report the results of the complex case. Here, we set \begin{align*}
    \big(\eta_{\mathrm{impulsive}}\big)_j = \begin{cases}
    \frac{1}{\sqrt{2}} \cdot (s_j + it_j)\cdot \alpha \cdot \norm[\infty]{\hat b}, & j\in I, \\
    0 & \text{otherwise}
    \end{cases}
\end{align*} 
with random signs $s_j,t_j\in\{-1,1\}$.

We observe that, different to the GERK-(a,d) method, the GERK-(b,d) method is able to reconstruct the sparse vectors and gives the sparsest iterates. The exemplary plot of the vector components suggests that, when applying the GERK-(b,d) method, the remaining nonzero components might even vanish after more steps. Again, the sparsity of the last iterates is reported in Table~\ref{tab:impulsive_noise}. The middle figures in Figure \ref{fig:impulsive_noise_real_err} and Figure \ref{fig:impulsive_noise_complex_err} show the gradients of the lower level objective in \eqref{eq:DP} w.r.t. $x$. We observe that the REK and GERK-(a,d) method indeed solve \eqref{eq:DP} with $g$ from Example 4.5(a), and the GERK-(b,d) method seems to converge to a solution of \eqref{eq:DP} with $g$ from Example 4.5(b). The relative residuals, the relative gradient norm and the relative error for the REK method behaved like those for the GERK-(a,d) method (i.e., the black curves in the respective plots in Figure~\ref{fig:impulsive_noise_real_err} and Figure~\ref{fig:impulsive_noise_complex_err} are hidden behind the blue and red curve). We would also like to mention that the speed of the reconstruction depends heavily on the condition of the matrix~$A$. For~$A$ with condition as in our experiment and if $b$ is corrupted by small additional noise, the method is still able to approximate the sparse solution with error in the order of the level of the additional noise.

\begin{figure}[htb]
  \centering
  \includegraphics{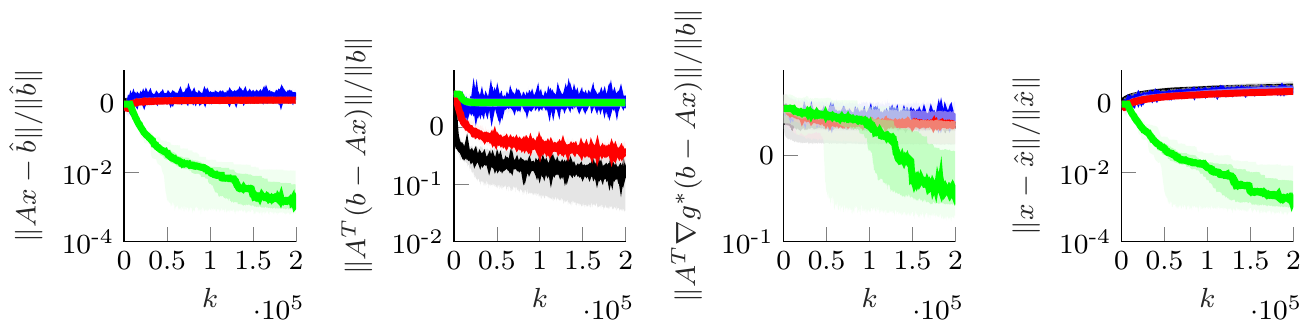}
  \caption{
  A comparison of randomized sparse Kaczmarz (blue), randomized extended Kaczmarz (black), GERK-(a,d) (red) and GERK-(b,d) method (green). Experiment (ii) with $m = 1000, n = 500,$ sparsity$=25$, rank $r=250$, $\epsilon=10^{-2}$, $\tau=10^{-3}$, $\lambda=10$, uniform probabilities $p$, $50$ repeats. 
  From left to right: Plot of relative residual $\|Ax-\hat b\|_2/\|\hat b\|_2$, relative gradient norm $\|A^T(b-Ax)\|_2/\|b\|_2$, $\|A^T\nabla g^*(b-Ax)\|_2/\|b\|_2$ and relative error $\|x-\hat x\|_2/\|\hat x\|_2$. Thick line shows median over all trials, light area is between min and max, darker area indicates 25th and 75th quantile.}
  \label{fig:impulsive_noise_real_err}
\end{figure}

\begin{figure}[htb]
  \centering
  \includegraphics{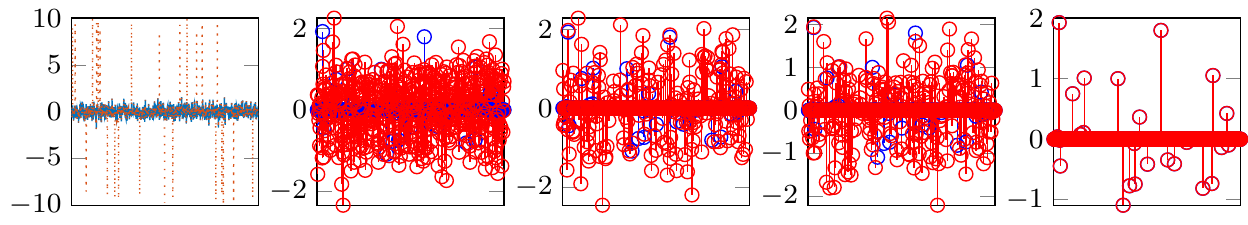}
      \caption{
Approximated solution in the experiment from Figure \ref{fig:impulsive_noise_real_err}. Left: Plot of $\hat b$ (blue) and noisy $b$ (red), right: Plot of $\hat x$ (blue) and last iterate $x$ (red) of randomized extended Kaczmarz, randomized sparse Kaczmarz, GERK-(a,d) and GERK-(b,d) method
    }
    \label{fig:impulsive_noise_real_x_components}
\end{figure}

\begin{figure}[htb]
  \centering
  \includegraphics{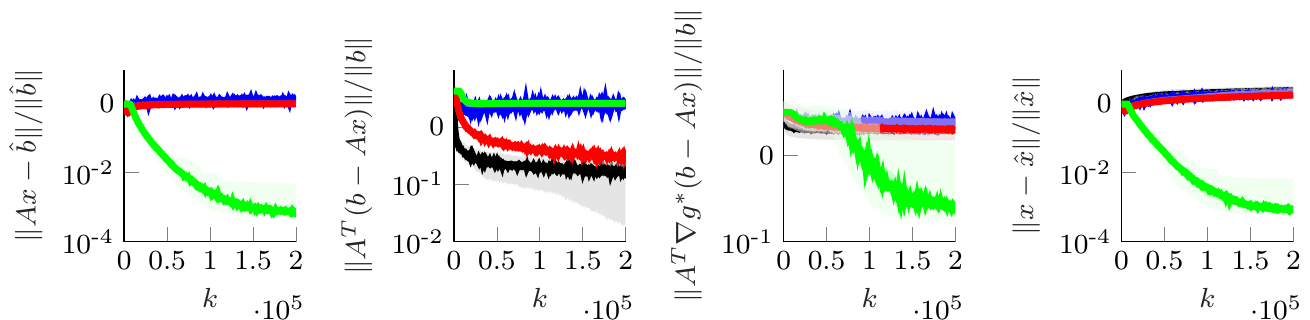}
  \caption{
  Experiment (ii) with complex $A$, $b$ and $\hat x$ and the complex method, cf. Remark \ref{rem:ComplexGREKComplexOperations}, with parameters as in Figure \ref{fig:impulsive_noise_real_err}, and with independent uniform noise in real and imaginary part.}
  \label{fig:impulsive_noise_complex_err}
\end{figure}

\begin{figure}[htb]
  \centering
  \includegraphics{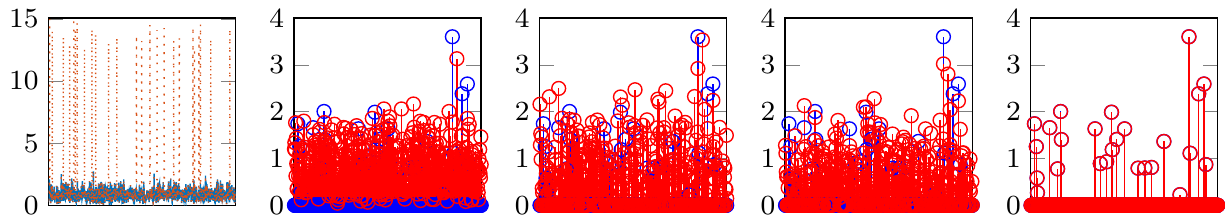}
    \caption{
Approximated solution in the experiment from Figure \ref{fig:impulsive_noise_complex_err}, only absolute values. Left: Plot of $\hat b$ (blue) and noisy $b$ (red), right: Plot of $\hat x$ (blue) and last iterate $x$ (red) of randomized extended Kaczmarz, randomized sparse Kaczmarz, GERK-(a,d) and the GERK-(b,d) method
    }
    \label{fig:impulsive_noise_complex_x_components}
\end{figure}

\begin{table}[htb]
    \centering
    \begin{tabular}{lcc}
    \toprule  
        Algorithm & Figure \ref{fig:impulsive_noise_real_x_components} & Figure \ref{fig:impulsive_noise_complex_x_components} \\ \midrule
        REK & 499/500/500 & 500/500/500  \\ 
        SRK & 154/182.5/222 & 253/287.5/326  \\ 
        GERK-(a,d) & 197/219/256 & 307/329/352  \\
        GERK-(b,d) & 28/36/51 & 30/45/70  \\
        \bottomrule
    \end{tabular}
    \caption{Sparsity of last iterates (\#$|x_{N,i}|>10^{-5}$) in Figures \ref{fig:impulsive_noise_real_x_components} and \ref{fig:impulsive_noise_complex_x_components}}\label{tab:impulsive_noise}
\end{table}
\end{enumerate}

\section{Conclusion}
\label{sec:conclusion}

We showed that the extended randomized Kaczmarz method can be further generalized to the case of sparse least squares solutions for inconsistent systems. We can even allow different smooth and strongly convex data misfit functions $g^{*}$ that can be modeled to cover different noise models.  Moreover, under a global error bound we obtain linear convergence in this case and we show that these global error bounds hold under mild regularity assumptions. Our numerical experiments indicate that this generalization is indeed successful for rank deficient inconsistent least squares problems for both large normally distributed noise in the complement of the range of the system matrix and the case of large impulsive noise. Future research could consider adjoint mismatch as in~\cite{LRS18}, or averaging as in~\cite{moorman2021randomized}.
Furthermore, although our analysis includes block variants (which was important for the complex case $\KK=\CC$), here we did not concentrate on proving or validating numerically an actual advantage of using block variants.
This is an important topic for future research, and it would be interesting to know whether the results in~\cite{Wu22} can be generalized to our setting.

\bibliography{./literature}
\bibliographystyle{plain}

\end{document}